\documentclass[11pt,reqno]{amsart}

\setlength{\textwidth}{6.3in} \setlength{\textheight}{9.25in}
\setlength{\evensidemargin}{0in} \setlength{\oddsidemargin}{0in}
\setlength{\topmargin}{-.3in}

\usepackage{graphicx}

\usepackage{xspace}
\usepackage{amsmath,amsthm,amsfonts,amssymb,latexsym,mathrsfs,color}
\usepackage{hyperref}

\newtheorem{theorem}{Theorem}
\newtheorem{corollary}[theorem]{Corollary}
\newtheorem{proposition}[theorem]{Proposition}

\newtheorem{definition}[theorem]{Definition}

\newcommand{\negg}{{\rm neg\,}}

\newcommand{\altrun}{{\rm altrun\,}}

\newcommand{\udrun}{{\rm udrun\,}}
\newcommand{\val}{{\rm val\,}}
\newcommand{\fap}{{\rm fap\,}}

\newcommand{\fdes}{{\rm fdes\,}}

\newcommand{\ap}{{\rm ap\,}}
\newcommand{\lap}{{\rm lap\,}}
\newcommand{\lpk}{{\rm lpk\,}}

\newcommand{\ipk}{{\rm ipk\,}}
\newcommand{\pk}{{\rm pk\,}}

\newcommand{\des}{{\rm des\,}}

\newcommand{\exc}{{\rm exc\,}}
\newcommand{\fexc}{{\rm fexc\,}}

\newcommand{\mdn}{\mathcal{S}^D_n}

\newcommand{\msn}{\mathcal{S}_n}

\newcommand{\lrf}[1]{\lfloor #1\rfloor}

\newcommand{\mbn}{{\mathcal S}^B_n}

\newcommand{\mqn}{\mathcal{Q}_n}

\DeclareMathOperator{\LHP}{LHP}
\DeclareMathOperator{\R}{\mathbb{R}}
\DeclareMathOperator{\re}{Re}

\newcommand{\eulerian}[2]{\genfrac{|}{|}{0pt}{}{#1}{#2}}

\linespread{1.25}

\title{Eulerian pairs and Eulerian recurrence systems}
\author[S.-M.~Ma]{Shi-Mei Ma}
\address{School of Mathematics and Statistics,
        Northeastern University at Qinhuangdao,
         Hebei 066000, P.R. China}
\email{shimeimapapers@163.com (S.-M. Ma)}
\author[J.~Ma]{Jun Ma}
\address{Department of mathematics, Shanghai Jiao Tong University, Shanghai, P.R. China}
\email{majun904@sjtu.edu.cn(J.~Ma)}
\author{Jean Yeh}
\address{Department of Mathematics, National Kaohsiung Normal University, Kaohsiung 82444, Taiwan}
\email{chunchenyeh@nknu.edu.tw}
\author[Y.-N. Yeh]{Yeong-Nan Yeh}
\address{Institute of Mathematics,
        Academia Sinica, Taipei, Taiwan}
\email{mayeh@math.sinica.edu.tw (Y.-N. Yeh)}
\thanks{The first and third authors are both corresponding authors}
\subjclass[2010]{Primary 05A05; Secondary 26C05}
\begin{document}

\maketitle
\begin{abstract}
In this paper, we characterize a duality relation between Eulerian recurrences and Eulerian recurrence systems,
which generalizes and unifies Hermite-Biehler decompositions of several enumerative polynomials, including flag descent polynomials for hyperoctahedral group, flag ascent-plateau polynomials for Stirling permutations, up-down run polynomials for symmetric group and alternating run polynomials for hyperoctahedral group.
As applications, we derive some properties of associated enumerative polynomials.
In particular, we find that both the ascent-plateau polynomials and left ascent-plateau polynomials for Stirling permutations are alternatingly increasing,
and so they are unimodal with modes in the middle.

\bigskip

\noindent{\sl Keywords}: Eulerian polynomials, Eulerian pairs, Eulerian recurrence systems
\end{abstract}
\date{\today}
\section{Introduction}
Given $m,n\in \mathbb{N}$. When $m\leq n$, let $[m,n]=\{m,m+1,\ldots,n\}$. As usual, let $[n]=\{1,2,\ldots,n\}$.
The cardinality of a set $A$ will be
denoted by $\#A$. Let $\msn$ be the set of all permutations of $[n]$ and
let $\pi=\pi(1)\pi(2)\cdots\pi(n)\in\msn$.
Let $\mbn$ be the hyperoctahedral group of rank $n$. Elements of $\mbn$ are signed permutations $\pi$ of the set $\pm[n]$ such that $\pi(-i)=-\pi(i)$ for all $i$, where $\pm[n]=\{\pm1,\pm2,\ldots,\pm n\}$.
Let $\mdn$ denote the group of even signed permutations, which is a Coxeter group of type $D$ of rank $n$.
The group $\mdn$ is the subgroup of $\mbn$ consisting of signed permutations with an even number of negative entries among $\pi(1),\pi(2)\ldots,\pi(n)$.
Define
\begin{equation*}
\begin{split}
\des_A(\pi)&:=\#\{i\in [n-1]:~\pi(i)>\pi({i+1})\},\\
\des_B(\pi)&:=\#\{i\in [0,n-1]:~\pi(i)>\pi({i+1}),~\pi(0)=0\},\\
\des_D(\pi)&:=\#\{i\in[0,n-1]:~\pi(i)>\pi({i+1}),~\pi(0)=-\pi(2)\}.
\end{split}
\end{equation*}
The types $A,B$ and $D$ Eulerian polynomials are
respectively defined by
\begin{equation*}
\begin{split}
A_n(x)&=\sum_{\pi\in\msn}x^{\des_A(\pi)},~
B_n(x)=\sum_{\pi\in\mbn}x^{\des_B(\pi)},~
D_n(x)=\sum_{\pi\in \mdn}x^{\des_D(\pi)}.
\end{split}
\end{equation*}
There is a close connection among the three types of Eulerian polynomials~\cite[Lemma 9.1]{Stembridge94}:
\begin{equation}\label{Dnx-BnxAnx}
D_n(x)=B_n(x)-n2^{n-1}xA_{n-1}(x) \quad {\text for}\quad n\geq 2.
\end{equation}
Subsequently, Brenti~\cite[Theorem~4.7]{Brenti94} obtained a $q$-analogue of~\eqref{Dnx-BnxAnx}. There are several recurrences for $D_n(x)$ (see~\cite{Chow03,Hyatt16}).
The polynomials $A_n(x),B_n(x)$ and $D_n(x)$ have several common properties, including unimodality, real-rootedness and $\gamma$-positivity
(see~\cite{Athanasiadis17,Hyatt16,Liu07,Petersen07,Savage15,Yang}).
This paper is motivated by the recent work of Hyatt~\cite{Hyatt16} and Hwang, Chern and Duh~\cite{Hwang20}.

Let $\mbn=B_n^+\cup B_n^-$ and $\mdn=D_n^+\cup D_n^-$, where
$$B_n^+=\{\pi\in\mbn: \pi(n)>0\},~B_n^-=\{\pi\in\mbn:~\pi(n)<0\},$$
$$D_n^+=\{\pi\in\mdn: \pi(n)>0\},~D_n^-=\{\pi\in\mdn:~\pi(n)<0\}.$$
Define $$P_n(x)=\sum_{k=0}^{n-1}\binom{n}{k}B_k(x)(x-1)^{n-k-1},~Q_n(x)=\sum_{k=0}^{n-1}\binom{n}{k}D_k(x)(x-1)^{n-k-1}.$$
Given a polynomial $f(x)$ of degree $n$. Let $\widetilde{f}(x)=x^nf(1/x)$.
In~\cite{Hyatt16}, Hyatt found that
\begin{equation}\label{Bnx}
B_n(x)=P_n(x)+x^nP_n(1/x)=P_n(x)+x\widetilde{P}_n(x)~\text{for $n\geq 1$},
\end{equation}
\begin{equation*}\label{Dnx}
D_n(x)=Q_n(x)+x^nQ_n(1/x)=Q_n(x)+x\widetilde{Q}_n(x)~\text{for $n\geq 2$},
\end{equation*}
where
\begin{equation*}
P_n(x)=\sum_{\pi\in B_n^+}x^{\des_B(\pi)},~x^nP_n(1/x)=\sum_{\pi\in B_n^-}x^{\des_B(\pi)},
\end{equation*}
\begin{equation*}
Q_n(x)=\sum_{\pi\in D_n^+}x^{\des_D(\pi)},~x^nQ_n(1/x)=\sum_{\pi\in D_n^-}x^{\des_D(\pi)}.
\end{equation*}
Motivated by~\eqref{Bnx}, in the following we shall present a more natural decomposition of $B_n(x)$.

Following~\cite{Adin2001}, the {\it flag descent number} of $\pi\in\mbn$
is defined by
\begin{equation*}
\fdes(\pi):=\begin{cases}
2\des_A(\pi)+1,& \text{if $\pi(1)<0$};\\
2\des_A(\pi), & \text{otherwise}.
\end{cases}
\end{equation*}
Clearly, $\fdes(\pi)=\des_A(\pi)+\des_B(\pi)$. The {\it flag descent polynomial} is defined by
$$C_n(x)=\sum_{\pi\in \mbn}x^{\fdes(\pi)}.$$
It follows from~\cite[Theorem~4.4]{Adin2001} that
\begin{equation}\label{fdesAnx}
C_n(x)=(1+x)^nA_n(x),
\end{equation}
and so $C_n(x)$ is symmetric and unimodal. Let $\negg(\pi):=\#\{i\in [n]:~\pi(i)<0\}$.
Consider the $q$-flag descent polynomials
$$C_n(x,q)=\sum_{\pi\in \mbn}x^{\fdes(\pi)}q^{\negg(\pi)}.$$
Set $\mbn=C_n^+\cup C_n^-$, where $C_n^+=\{\pi\in\mbn:~\pi(1)>0\}$ and $C_n^-=\{\pi\in \mbn:~\pi(1)<0\}$.
Define
$$C_n^E(x,q)=\sum_{\pi\in C_n^+}x^{\des_A(\pi)}q^{\negg(\pi)},~C_n^O(x,q)=\sum_{\pi\in C_n^-}x^{\des_A(\pi)}q^{\negg(\pi)}.$$
Then we have
\begin{equation}\label{SnxqEO}
C_n(x,q)=C_n^E(x^2,q)+xC_n^O(x^2,q).
\end{equation}

Consider the {\it type $B$ $q$-Eulerian polynomials}
$$B_n(x,q)=\sum_{\pi\in \mbn}x^{\des_B(\pi)}q^{\negg(\pi)}.$$
The polynomials $B_n(x,q)$ satisfy the recurrence
$$B_{n+1}(x,q)=((1+q)nx+qx+1)B_n(x,q)+(1+q)x(1-x)B_n'(x,q),$$
with $B_0(x,q)=1$ (see~\cite[Theorem~3.4]{Brenti94}).
Note that $\des_B(\pi)=\des_A(\pi)$ for $\pi\in C_n^+$ and $\des_B(\pi)=\des_A(\pi)+1$ for $\pi\in C_n^-$.
Hence
\begin{equation*}
B_n(x,q)=C_n^E(x,q)+xC_n^O(x,q).
\end{equation*}
Set $C_n^E(x,1)=C_n^E(x)$ and $C_n^O(x,1)=C_n^O(x)$. By~\eqref{fdesAnx}, the polynomial $C_n(x)$ is symmetric. Thus $C_n^E(x)=x^{n-1}C_n^O(1/x)$ for $n\geq 1$.
Note that $B_n(x,1)=B_n(x)$ and $C_n(x,1)=C_n(x)$. We can now conclude the following result from the discussion above.
\begin{proposition}\label{SnxBnx-prop}
For $n\geq 1$, we have
\begin{equation}\label{SnxBnx-decom}
C_n(x)=C_n^E(x^2)+xC_n^O(x^2),~B_n(x)=C_n^E(x)+xC_n^O(x).
\end{equation}
\end{proposition}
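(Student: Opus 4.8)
The plan is to read both identities off the $q$-refined decompositions already assembled just above the statement and then specialize $q=1$; no genuinely new argument is needed beyond organizing the two $q$-identities correctly.

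First I would recall equation~\eqref{SnxqEO}. It is obtained by splitting $\mbn=C_n^+\cup C_n^-$ according to the sign of $\pi(1)$ and inserting the two branches of the definition of $\fdes$: on $C_n^+$ we have $\fdes(\pi)=2\des_A(\pi)$, hence $x^{\fdes(\pi)}=(x^2)^{\des_A(\pi)}$, whereas on $C_n^-$ we have $\fdes(\pi)=2\des_A(\pi)+1$, hence $x^{\fdes(\pi)}=x\,(x^2)^{\des_A(\pi)}$. Weighting each term by $q^{\negg(\pi)}$ and summing gives $C_n(x,q)=C_n^E(x^2,q)+xC_n^O(x^2,q)$, which is precisely~\eqref{SnxqEO}.

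Next I would recall the companion decomposition of $B_n(x,q)$. The point is that $\des_B$ and $\des_A$ differ only through the boundary index $i=0$: since $\pi(0)=0$ in the definition of $\des_B$, the position $i=0$ contributes a type $B$ descent exactly when $\pi(1)<0$. Thus $\des_B(\pi)=\des_A(\pi)$ on $C_n^+$ and $\des_B(\pi)=\des_A(\pi)+1$ on $C_n^-$. Splitting $B_n(x,q)$ over the same partition and factoring out the extra $x$ coming from $C_n^-$ yields $B_n(x,q)=C_n^E(x,q)+xC_n^O(x,q)$.

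Finally I would put $q=1$ in both displays. Using $C_n(x,1)=C_n(x)$, $B_n(x,1)=B_n(x)$, $C_n^E(x,1)=C_n^E(x)$ and $C_n^O(x,1)=C_n^O(x)$, the two identities collapse precisely to~\eqref{SnxBnx-decom}. The only delicate point---and the nearest thing to an obstacle---is the bookkeeping at the boundary position $i=0$, namely checking that the sign of $\pi(1)$ simultaneously governs the parity shift in $\fdes$ and the extra unit in $\des_B$; once this is confirmed the proposition is immediate.
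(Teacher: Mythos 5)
Your argument is correct and is exactly the route the paper takes: the paper derives \eqref{SnxqEO} and the companion identity $B_n(x,q)=C_n^E(x,q)+xC_n^O(x,q)$ from the sign of $\pi(1)$ (via the two branches of $\fdes$ and the boundary contribution at $i=0$ in $\des_B$), and then states the proposition as an immediate consequence at $q=1$. Your write-up simply makes explicit the bookkeeping that the paper leaves as ``the discussion above.''
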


Comparing~\eqref{Bnx} with~\eqref{SnxBnx-decom}, we get the following result, and we give a proof of it for completeness.
\begin{proposition}
We have $C_n^E(x)=P_n(x)$ and $C_n^O(x)=\widetilde{P}_n(x)$.
\end{proposition}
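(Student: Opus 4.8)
The plan is to turn the polynomial identity into a combinatorial equidistribution and to prove the latter by an explicit involution, since the two displays \eqref{Bnx} and \eqref{SnxBnx-decom} alone do \emph{not} force the conclusion: writing the reversal $R(f)(x)=x^{n-1}f(1/x)$, both decompositions have the shape $B_n=f+xR(f)$ (with $f=P_n$ and $f=C_n^E$ respectively), and such a decomposition is far from unique, so the antisymmetric part of $f$ is genuinely undetermined by $B_n$ and the reversal symmetry. First I would record the combinatorial meaning of both sides. Hyatt already gives $P_n(x)=\sum_{\pi\in B_n^+}x^{\des_B(\pi)}$ and $x\widetilde P_n(x)=\sum_{\pi\in B_n^-}x^{\des_B(\pi)}$. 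On the other side, $\pi(1)>0$ forces $\pi(0)=0<\pi(1)$, so position $0$ is never a descent for $\pi\in C_n^+$; hence $\des_B(\pi)=\des_A(\pi)$ there and $C_n^E(x)=\sum_{\pi\in C_n^+}x^{\des_B(\pi)}$, and likewise $xC_n^O(x)=\sum_{\pi\in C_n^-}x^{\des_B(\pi)}$. Since $C_n^E+xC_n^O=B_n=P_n+x\widetilde P_n$, it suffices to prove the single identity $C_n^E=P_n$, after which $C_n^O=\widetilde P_n$ follows by cancelling $x$. Thus the whole statement reduces to showing that $\des_B$ is equidistributed over $C_n^+=\{\pi(1)>0\}$ and $B_n^+=\{\pi(n)>0\}$.

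Next I would split $\mbn$ according to the signs of the first and last entries, setting $S_{a,b}(x)=\sum x^{\des_B(\pi)}$ over $\{\pi\in\mbn:\sgn\pi(1)=a,\ \sgn\pi(n)=b\}$ for $a,b\in\{+,-\}$. Then $C_n^E=S_{+,+}+S_{+,-}$ and $P_n=S_{+,+}+S_{-,+}$, so the target becomes the clean identity $S_{+,-}=S_{-,+}$. The reversal $\pi^r$ given by $\pi^r(i)=\pi(n+1-i)$ bijects $\{\pi(1)>0,\pi(n)<0\}$ onto $\{\pi(1)<0,\pi(n)>0\}$, and a short descent count (position $0$ becomes a descent, while each interior descent of $\pi$ becomes an ascent of $\pi^r$) gives $\des_B(\pi^r)=n-\des_B(\pi)$ on this block. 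Hence $S_{-,+}(x)=x^nS_{+,-}(1/x)$, and the proof is finished once $S_{+,-}$ is shown to be palindromic, i.e. invariant under $f(x)\mapsto x^nf(1/x)$.

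The main obstacle is exactly this last step: producing an explicit involution on $\{\pi\in\mbn:\pi(1)>0,\ \pi(n)<0\}$ that sends $\des_B$ to $n-\des_B$. The obvious candidates are unhelpful — global negation $\pi\mapsto-\pi$ and reversal each leave this block, while their composite "negate-and-reverse" $\pi\mapsto(-\pi(n),\dots,-\pi(1))$ maps the block to itself but \emph{preserves} $\des_B$ instead of reflecting it — so a more delicate, position-aware involution is needed, and I expect the real work to lie here. As a cross-check, and an alternative packaging that avoids isolating $S_{+,-}$, one may instead aim at the equivalent interleaved identity $C_n(x)=P_n(x^2)+x\widetilde P_n(x^2)$: since \eqref{SnxBnx-decom} already gives $C_n(x)=C_n^E(x^2)+xC_n^O(x^2)$, matching the even and odd parts in $x$ would deliver $C_n^E=P_n$ and $C_n^O=\widetilde P_n$ at once. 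This interleaved identity is of the same depth as the equidistribution above, so either route ultimately rests on the same combinatorial input.
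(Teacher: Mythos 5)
Your reduction is sound and in fact lands exactly where the paper's proof starts: after observing that $C_n^E(x)=\sum_{\pi\in C_n^+}x^{\des_B(\pi)}$, the proposition is equivalent to the equidistribution of $\des_B$ over $C_n^+=\{\pi\in\mbn:\pi(1)>0\}$ and $B_n^+=\{\pi\in\mbn:\pi(n)>0\}$, i.e.\ (in your notation) to the single identity $S_{+,-}=S_{-,+}$. Your preliminary remarks are also correct and worth keeping: the two decompositions of $B_n(x)$ alone do not force the conclusion, and the reversal computation $\des_B(\pi^r)=n-\des_B(\pi)$ on the block $\{\pi(1)>0,\ \pi(n)<0\}$ is right. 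But the argument then stops at precisely the point where all of the content lies: you never establish that $S_{+,-}$ is palindromic (equivalently, you never exhibit a $\des_B$-preserving bijection between the two mixed blocks), and you say yourself that you ``expect the real work to lie here.'' As written this is a genuine gap, not a finished proof --- everything you do establish is formal bookkeeping around the one step that requires a combinatorial idea.

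The missing idea is, however, quite simple, and it is exactly what the paper supplies: a cyclic shift. For $\pi$ with $\pi(1)>0$ and $\pi(n)<0$, let $k$ be the smallest index with $\pi(k)>0$ and $\pi(k+1)<0$ (so $\pi(1),\dots,\pi(k)$ is the initial maximal run of positive entries), and set $\Phi(\pi)=\pi(k+1)\cdots\pi(n)\pi(1)\cdots\pi(k)$. This lands in $\{\pi(1)<0,\ \pi(n)>0\}$; the descent of $\pi$ at position $k$ is traded for a new descent of $\Phi(\pi)$ at position $0$ (since $0>\pi(k+1)$), while the junction $\pi(n)<0<\pi(1)$ creates no new descent, so $\des_B$ is preserved; and the map is inverted by cutting off the final maximal run of positive entries and rotating it to the front. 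This yields $S_{+,-}=S_{-,+}$ directly as an identity of polynomials, which is a strictly easier target than the palindromicity of $S_{+,-}$ that your route demands (palindromicity then follows a posteriori by combining with your reversal identity). So: right reduction, essentially the paper's, but the decisive bijection is absent, and without it the proposition is not proved.
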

\begin{proof}
Define
\begin{align*}
{^+B_n^+}&=\{\pi\in B_n: \pi(1)>0,~ \pi(n)>0\},\\
{^+B_n^-}&=\{\pi\in B_n: \pi(1)>0,~ \pi(n)<0\},\\
{^-B_n^+}&=\{\pi\in B_n: \pi(1)<0,~ \pi(n)>0\},\\
{^-B_n^-}&=\{\pi\in B_n: \pi(1)<0,~ \pi(n)<0\}.
\end{align*}
Note that $C_n^+={^+B_n^+}\cup {^+B_n^-}$ and $B_n^+={^+B_n^+}\cup {^-B_n^+}$.
A bijection $\Phi$ from $C_n^+$ to $B_n^+$ is given as follows:
\begin{itemize}
  \item [$(i)$] If $\pi\in {^+B_n^+}$, then let $\Phi(\pi)=\pi$;
  \item  [$(ii)$] For $\pi\in {^+B_n^-}$, let $k$ be the smallest index of $\pi$ such that $\pi(k)>0$ and $\pi(k+1)<0$.
  Then we define $\Phi(\pi)=\pi(k+1)\cdots \pi(n)\pi(1)\cdots\pi(k)$.
\end{itemize}
Note that $\des_B\left(\Phi(\pi)\right)=\des_B(\pi)$. Hence
\begin{equation*}
\sum_{\pi\in C_n^+}x^{\des_B(\pi)}=\sum_{\pi\in B_n^+}x^{\des_B(\pi)},
\end{equation*}
Along the same lines, it is easy to show that
$\sum_{\pi\in C_n^-}x^{\des_B(\pi)}=\sum_{\pi\in B_n^-}x^{\des_B(\pi)}$.
\end{proof}

Motivated by Proposition~\ref{SnxBnx-prop}, in the following we shall establish a duality relation between Eulerian recurrences and Eulerian recurrence systems.
It is well known that the Eulerian polynomials $A_n(x)$ and $B_n(x)$ satisfy the following recurrence relations:
\begin{equation*}
\begin{split}
A_{n}(x)&=(nx+1-x)A_{n-1}(x)+x(1-x)A_{n-1}'(x),\\
B_{n}(x)&=(2nx+1-x)B_{n-1}(x)+2x(1-x)B_{n-1}'(x),
\end{split}
\end{equation*}
with the initial conditions $A_0(x)=B_0(x)=1$ (see~\cite{Brenti94,Zhuang17}).
In recent years, there has been much work on the generalizations of Eulerian recurrences, see~\cite{Barbero14,Hwang20,Zhu} and references therein.
In~\cite{Barbero14}, Barbero, Salas and Villase\~{n}or systematically studied and classified the partial differential equations that are satisfied by
the generating function
$$f(x,y)=\sum_{n,k\geq 0}\eulerian{n}{k}x^k\frac{y^n}{n!},$$
where the numbers $\eulerian{n}{k}$ satisfy the recurrence relation
$$\eulerian{n}{k}=(\alpha n+\beta k+\gamma)\eulerian{n-1}{k-1}+(\alpha' n+\beta' k+\gamma')\eulerian{n-1}{k},$$
with $\eulerian{0}{0}=1$ and $\eulerian{n}{k}=0$ when $n<0$ or $k<0$.
Very recently, Hwang, Chern and Duh~\cite{Hwang20} considered the general Eulerian recurrence:
\begin{equation}\label{Eulerian02}
P_{n}(x)=(\alpha(x)n+\gamma(x))P_{n-1}(x)+\beta(x)(1-x)P_{n-1}'(x),
\end{equation}
with $P_0(x),\alpha(x),\beta(x)$ and $\gamma(v)$ are given functions of $x$ (they are often polynomials). They studied the limiting distribution of the coefficients of $P_n(x)$ for large $n$ when the coefficients are nonnegative. In particular, Hwang, Chern and Duh~\cite[Section~9.3]{Hwang20} discussed the limiting distribution of the coefficients polynomials that satisfy Eulerian recurrence systems.

\begin{definition}\label{def01}
Let $\{E_n(x)\}_{n\geq 0}$ and $\{O_n(x)\}_{n\geq 0}$ be two sequences of polynomials.
We say that the ordered pair of polynomials $(E_n(x),O_n(x))$ is a Eulerian pair if $\deg E_n(x)\geq \deg O_n(x)$ and
the polynomials $E_n(x)$ and $O_n(x)$ satisfy the Eulerian recurrence system:
\begin{equation}\label{EulerianPair}
\left\{
  \begin{array}{l}
    E_{n+1}(x)=p_n(x)E_n(x)+q_n(x)E_n'(x)+r_n(x)O_n(x), \\
    O_{n+1}(x)=u_n(x)O_n(x)+v_n(x)O_n'(x)+w_n(x)E_n(x),
  \end{array}
\right.
\end{equation}
where $E_0(x),O_0(x),p_n(x),q_n(x),r_n(x),u_n(x),v_n(x),w_n(x)$ are given polynomials of low degrees.
\end{definition}

Let $f(x)=\sum_{i=0}^nf_ix^i$. Throughout this paper, we always let
\begin{align*}
f^E(x)&=\sum_{k=0}^{\lrf{\frac{n}{2}}}f_{2k}x^{k},~f^O(x)=\sum_{k=0}^{\lrf{\frac{n-1}{2}}}f_{2k+1}x^{k};\\
f^e(x)&=\sum_{k=0}^{\lrf{\frac{n}{2}}}f_{2k}x^{2k},~f^o(x)=\sum_{k=0}^{\lrf{\frac{n-1}{2}}}f_{2k+1}x^{2k+1}.
\end{align*}
Hence $f(x)=f^E(x^2)+xf^O(x^2)=f^e(x)+f^o(x)$.
As an extension of~\eqref{Eulerian02}, we define a sequence of polynomials $F_n(x)$ by using the following general Eulerian recurrence:
\begin{equation}\label{Eulerian03}
F_{n+1}(x)=\alpha_n(x)F_{n}(x)+\beta_n(x)F_{n}'(x),
\end{equation}
where $F_0(x),\alpha_n(x)$ and $\beta_n(x)$ are given polynomials of low degrees.
We can now present the first main result of this paper.
\begin{theorem}\label{mathm01}
Let $\langle E_n(x),O_n(x)\rangle$ be a Eulerian pair that satisfies the Eulerian recurrence syetem~\eqref{EulerianPair},
and let $F_n(x)$ be the polynomial defined by the recurrence~\eqref{Eulerian03}.
Then the polynomial $F_n(x)$ has the expression $F_n(x)=E_n(x^2)+xO_n(x^2)$ if and only if the following conditions hold:
\begin{align*}
u_n(x)&=p_n(x)+\frac{1}{2x}q_n(x),~v_n(x)=q_n(x),~w_n(x)=\frac{1}{x}r_n(x),\\
\alpha_n(x)&=p_n(x^2)+\frac{1}{x}r_n(x^2),~\beta_n(x)=\frac{1}{2x}q_n(x^2),~\beta_n^e(x)=0.
\end{align*}
\end{theorem}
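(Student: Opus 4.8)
The plan is to reduce the biconditional to a single inductive step and then to separate even and odd powers of $x$. Because $E_n(x^2)$ involves only even powers of $x$ and $xO_n(x^2)$ only odd powers, the asserted identity $F_n(x)=E_n(x^2)+xO_n(x^2)$ says exactly that $F_n^e(x)=E_n(x^2)$ and $F_n^o(x)=xO_n(x^2)$. So, assuming $F_n(x)=E_n(x^2)+xO_n(x^2)$ at stage $n$, I would show that the same relation propagates to stage $n+1$ precisely when the six displayed conditions hold; combined with the base case $F_0(x)=E_0(x^2)+xO_0(x^2)$, this yields the equivalence for all $n$. The conditions should really be read as an intertwining of the recurrence operators: the substitution $(E_n,O_n)\mapsto E_n(x^2)+xO_n(x^2)$ must carry the system \eqref{EulerianPair} to the recurrence \eqref{Eulerian03}, so the identities among the coefficient polynomials must hold independently of the particular pair.

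First I would differentiate the assumed expression to get $F_n'(x)=2xE_n'(x^2)+O_n(x^2)+2x^2O_n'(x^2)$, and substitute this together with $F_n(x)=E_n(x^2)+xO_n(x^2)$ into \eqref{Eulerian03}. On the other side, I would expand the target $E_{n+1}(x^2)+xO_{n+1}(x^2)$ by feeding \eqref{EulerianPair} evaluated at $x^2$. Both sides are then linear combinations of the four building blocks $E_n(x^2),E_n'(x^2),O_n(x^2),O_n'(x^2)$, each of which is an even function of $x$, with coefficients that are polynomials in $x$; reading off these coefficients from \eqref{Eulerian03} gives $\alpha_n$, $2x\beta_n$, $x\alpha_n+\beta_n$, $2x^2\beta_n$ respectively.

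The crux is to split each side into even and odd parts, writing $\alpha_n=\alpha_n^e+\alpha_n^o$ and $\beta_n=\beta_n^e+\beta_n^o$. Since each building block is even, the even part of $F_{n+1}(x)$ equals
\[
\alpha_n^eE_n(x^2)+2x\beta_n^oE_n'(x^2)+(x\alpha_n^o+\beta_n^e)O_n(x^2)+2x^2\beta_n^eO_n'(x^2),
\]
which must equal $E_{n+1}(x^2)=p_n(x^2)E_n(x^2)+q_n(x^2)E_n'(x^2)+r_n(x^2)O_n(x^2)$, and symmetrically the odd part must equal $xO_{n+1}(x^2)$. Matching the coefficients of the four blocks on the even side gives the chain: the $O_n'(x^2)$-coefficient forces $2x^2\beta_n^e=0$, i.e.\ $\beta_n^e(x)=0$ (this is exactly where that condition is born, since $E_{n+1}(x^2)$ carries no $O_n'$ term); the $E_n'(x^2)$-coefficient gives $2x\beta_n^o=q_n(x^2)$, so $\beta_n(x)=\frac{1}{2x}q_n(x^2)$; and the $E_n(x^2)$- and $O_n(x^2)$-coefficients give $\alpha_n^e=p_n(x^2)$ and $x\alpha_n^o=r_n(x^2)$, whence $\alpha_n(x)=p_n(x^2)+\frac{1}{x}r_n(x^2)$. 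Substituting these into the odd-part equation and comparing coefficients of $E_n(x^2),O_n(x^2),O_n'(x^2)$ then yields $w_n(x)=\frac{1}{x}r_n(x)$, $u_n(x)=p_n(x)+\frac{1}{2x}q_n(x)$, and $v_n(x)=q_n(x)$, which are the remaining three conditions.

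The main obstacle I anticipate is justifying that reading off coefficients of $E_n(x^2),E_n'(x^2),O_n(x^2),O_n'(x^2)$ is legitimate in the \emph{necessity} direction; sufficiency is a routine back-substitution followed by induction from the base case. I would resolve this by phrasing the equivalence at the operator level, so that the even- and odd-part identities must hold for a generic Eulerian pair, where the four blocks carry no nontrivial polynomial relation and coefficient comparison is forced. This viewpoint also makes transparent the implicit polynomiality requirements hidden in the statement, namely $x\mid q_n(x)$ and $x\mid r_n(x)$, which are exactly what is needed for $\beta_n$, $u_n$ and $w_n$ to be genuine polynomials and which are automatically compatible with $\beta_n^e(x)=0$ (an odd polynomial, being $\frac{1}{2x}q_n(x^2)$).
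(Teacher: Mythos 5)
Your proposal is correct and follows essentially the same route as the paper's proof: differentiate $F_n(x)=E_n(x^2)+xO_n(x^2)$, substitute into \eqref{Eulerian03}, split the result into even and odd parts, and match the coefficients of $E_n(x^2)$, $E_n'(x^2)$, $O_n(x^2)$, $O_n'(x^2)$ against \eqref{EulerianPair}, with $\beta_n^e(x)=0$ forced by the absence of an $O_n'$ term in the $E_{n+1}$ equation. Your additional remark about justifying the coefficient comparison in the necessity direction for a generic pair, and the implicit divisibility requirements $x\mid q_n(x)$ and $x\mid r_n(x)$, is a point the paper passes over silently, but it refines rather than changes the argument.
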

\begin{proof}
By using $F_n(x)=E_n(x^2)+xO_n(x^2)$, we obtain
\begin{equation*}
F_n'(x)=2xE_n'(x^2)+O_n(x^2)+2x^2O_n'(x^2).
\end{equation*}
Then it follows from~\eqref{Eulerian03} that
\begin{align*}
F_{n+1}(x)&=\alpha_n(x)\left(E_n(x^2)+xO_n(x^2)\right)+\\
&\beta_n(x)\left(2xE_n'(x^2)+O_n(x^2)+2x^2O_n'(x^2)\right).
\end{align*}
Comparing this with the expression
$F_{n+1}(x)=E_{n+1}(x^2)+xO_{n+1}(x^2)$, we obtain
\begin{align*}
E_{n+1}(x^2)&=\alpha_n^e(x)E_n(x^2)+x\alpha_n^o(x)O_n(x^2)+\\
&\beta_n^e(x)\left(O_n(x^2)+2x^2O_n'(x^2)\right)+2x\beta_n^o(x)E_n'(x^2),\\
O_{n+1}(x^2)&=\frac{1}{x}\alpha_n^o(x)E_n(x^2)+\alpha_n^e(x)O_n(x^2)+\\
&\frac{1}{x}\beta_n^o(x)\left(O_n(x^2)+2x^2O_n'(x^2)\right)+2\beta_n^e(x)E_n'(x^2).
\end{align*}
Since~\eqref{EulerianPair} holds, then $\beta_n^e(x)=0$.
Hence
\begin{equation}\label{EulerianPair02}
\left\{
  \begin{array}{l}
E_{n+1}(x^2)=\alpha_n^e(x)E_n(x^2)+2x\beta_n^o(x)E_n'(x^2)+x\alpha_n^o(x)O_n(x^2),\\
O_{n+1}(x^2)=\left(\alpha_n^e(x)+\frac{1}{x}\beta_n^o(x)\right)O_n(x^2)+2x\beta_n^o(x)O_n'(x^2)+\frac{1}{x}\alpha_n^o(x)E_n(x^2).
  \end{array}
\right.
\end{equation}
By comparing~\eqref{EulerianPair} with~\eqref{EulerianPair02}, we immediately get the following relations:
\begin{align*}
p_n(x^2)&=\alpha_n^e(x),~q_n(x^2)=2x\beta_n^o(x),~r_n(x^2)=x\alpha_n^o(x),\\
u_n(x^2)&=\alpha_n^e(x)+\frac{1}{x}\beta_n^o(x),~v_n(x^2)=2x\beta_n^o(x),~w_n(x^2)=\frac{1}{x}\alpha_n^o(x),
\end{align*}
which yield the desired result. This completes the proof.
\end{proof}

Let $F_n(x)$ be the polynomials defined by the recurrence~\eqref{Eulerian03}.
When $\beta_n^e(x)=0$, we can get the recurrence system~\ref{EulerianPair} by using
the following relations:
\begin{equation}\label{dualrelation01}
x\alpha_n(x)=xp_n(x^2)+r_n(x^2),~2x\beta_n(x)=q_n(x^2).
\end{equation}

A polynomial $p\in\R[x]$ is ({\it Hurwitz} or {\it asymptotically})
{\it stable} if every zero of $p$ is in the open left half plane
$\LHP=\{z\colon\re z<0\}$;
$p$ is {\it standard} if its leading coefficient is positive.
Suppose that $p,q\in\R[x]$ both have only real zeros,
that those of $p$ are $\xi_1\leqslant\cdots\leqslant\xi_n$,
and that those of $q$ are $\theta_1\leqslant\cdots\leqslant\theta_m$.
We say that $p$ {\it interlaces} $q$ if $\deg q=1+\deg p$ and the zeros of
$p$ and $q$ satisfy
$$
\theta_1\leqslant\xi_1\leqslant\theta_2\leqslant\cdots\leqslant\xi_n
\leqslant\theta_{n+1}.
$$
We also say that $p$ {\it alternates left of} $q$ if $\deg p=\deg q$
and the zeros of $p$ and $q$ satisfy
$$
\xi_1\leqslant\theta_1\leqslant\xi_2\leqslant\cdots\leqslant\xi_n
\leqslant\theta_n.
$$
We use the notation $p\prec q$ for
either ``$p$ interlaces $q$'' or ``$p$ alternates left of $q$''.
The next theorem is a version of the classical Hermite-Biehler theorem.
\begin{theorem}[{\cite[Theorem~4.1]{Branden11}}]\label{Hermite}
Let $F(x)=F^E(x^2)+xF^O(x^2)\in\R[x]$ be standard.
Then $F(x)$ is stable if and only if both $F^E(x)$ and $F^O(x)$ are standard,
have only nonpositive zeros, and $F^O(x)\prec F^E(x)$.
\end{theorem}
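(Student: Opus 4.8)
The plan is to deduce this from the classical Hermite--Biehler theorem for the upper half-plane by means of the rotation $x\mapsto ix$. First I would set $G(x)=F(ix)$ and observe that, since $z\mapsto iz$ maps the open left half-plane $\LHP$ bijectively onto the open upper half-plane, $F$ is stable if and only if every zero of $G$ lies in $\{z\colon \operatorname{Im}z>0\}$. Using $F(x)=F^E(x^2)+xF^O(x^2)$ and $(ix)^2=-x^2$, I would separate the real and imaginary parts of $G$ on the real axis, obtaining
\begin{equation*}
G(x)=A(x)+iB(x),\qquad A(x)=F^E(-x^2),\quad B(x)=xF^O(-x^2),
\end{equation*}
where $A,B\in\R[x]$. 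Then I would invoke the classical Hermite--Biehler criterion: the polynomial $A+iB$ has all of its zeros in the open upper half-plane precisely when $A$ and $B$ have only real, simple, interlacing zeros and the Wronskian $A(x)B'(x)-A'(x)B(x)$ has constant sign, fixed by the leading coefficients. This reduces everything to a statement about the real zeros of $A$ and $B$.

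The second step is to transport these conditions back through the substitution $y=-x^2$. If $F^E(y)=c\prod_j(y-\rho_j)$, then $A(x)=c\prod_j(-x^2-\rho_j)$ has only real zeros if and only if each $\rho_j\le 0$, in which case the zeros of $A$ are the symmetric pairs $\pm\sqrt{-\rho_j}$; the same analysis applies to the nonzero zeros of $B$ arising from $F^O$, together with the forced zero of $B$ at $x=0$. I would then show that interlacing of $A$ and $B$ along all of $\R$, symmetric about the origin and with $B$ vanishing at $0$, is equivalent to interlacing of the zeros of $F^E$ and $F^O$ along $(-\infty,0]$, that is, to $F^O\prec F^E$. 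Here the two alternatives packaged into $\prec$ emerge from the parity of $\deg F$: when $\deg F$ is even one has $\deg F^E=1+\deg F^O$ (so $F^O$ interlaces $F^E$), and when $\deg F$ is odd one has $\deg F^E=\deg F^O$ (so $F^O$ alternates left of $F^E$); in both cases $A$ and $B$ have consecutive degrees, so the classical interlacing hypothesis is applicable. Finally I would match the sign of the Wronskian and the positivity of the leading coefficient of $G$ with the requirement that $F^E$ and $F^O$ be standard.

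The hard part will be the bookkeeping in this transport step rather than any deep inequality. I expect the delicate points to be: (i) verifying that the symmetric pairing $\pm\sqrt{-\rho_j}$ converts the full-line interlacing of $A,B$ into the half-line interlacing of $F^E,F^O$ without double counting, in particular handling the central zero $B(0)=0$ and the possibility of a zero of $F^E$ or $F^O$ at the origin; (ii) pinning down the exact sign convention so that the upper half-plane Wronskian condition yields the correct direction $F^O\prec F^E$ rather than $F^E\prec F^O$; and (iii) propagating strictness of the interlacing through the substitution, which is where one must ensure that repeated or coincident zeros are genuinely excluded on both sides. If one prefers a self-contained argument, the remaining obstacle is to establish the classical upper half-plane Hermite--Biehler theorem itself, which I would prove by an argument-principle count of the zeros of $A+iB$ as $x$ traverses $\R$, reading off both the interlacing and the Wronskian sign from the monotonicity of $\arg G(x)$.
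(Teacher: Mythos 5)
The paper does not prove this statement at all: it is imported verbatim from Br\"and\'en's Theorem~4.1, so there is no in-paper argument to measure yours against. Your route is the standard derivation of the Hurwitz-stability Hermite--Biehler criterion and it is sound. Writing $G(x)=F(ix)=F^E(-x^2)+ixF^O(-x^2)$ correctly converts stability in $\LHP$ into the condition that all zeros of $A+iB$, with $A(x)=F^E(-x^2)$ and $B(x)=xF^O(-x^2)$, lie in the open upper half-plane; the classical interlacing-plus-Wronskian criterion then transports through $y=-x^2$ exactly as you describe: nonpositivity of the $\rho_j$ is realness of the zeros of $A$, the symmetric pairs $\pm\sqrt{-\rho_j}$ together with the forced zero of $B$ at the origin interlace on $\R$ precisely when the zeros of $F^E$ and $F^O$ interlace on $(-\infty,0]$, and the two parities of $\deg F$ account for the two alternatives packaged into $\prec$. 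Your outline would constitute a complete proof once the bookkeeping is written out.

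One point that your item (iii) brushes against deserves to be made explicit, because it is the only place where the statement as quoted and your argument can come apart. With ``stable'' defined via the \emph{open} left half-plane, the forward implication actually yields \emph{strictly} negative zeros of $F^E$ and \emph{strict} interlacing: a common zero $\rho<0$ of $F^E$ and $F^O$ forces zeros of $F$ at $\pm i\sqrt{-\rho}$ on the imaginary axis, and $F^E(0)=F(0)=0$ is likewise excluded. Conversely, the hypotheses as literally stated (only nonpositive zeros, weak inequalities in $\prec$) admit these degenerate configurations, for which $F$ has purely imaginary zeros and is therefore not stable in the open-half-plane sense. So the ``if'' direction of the theorem, under the paper's definitions, is true only after either strengthening the interlacing to strict (and requiring $F^E(0)\neq 0$) or reading stability with the closed half-plane. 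This is a defect of the quoted conventions rather than of your approach, but your write-up should say which version it proves; otherwise the transport step will appear to fail exactly at the coincident-zero cases you flagged.
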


The Hermite-Biehler theorem has been widely used to study the distribution of zeros. In this paper, we consider combinatorial aspects of the expression $F(x)=F^E(x^2)+xF^O(x^2)$, which we call the {\it Hermite-Biehler decomposition} of $F(x)$.
In Section~\ref{Section02}, we collect the Hermite-Biehler decompositions of several enumerative polynomials,
including flag descent polynomials for hyperoctahedral group, flag ascent-plateau polynomials for Stirling permutations, up-down run polynomials for symmetric group and
alternating run polynomials for hyperoctahedral group. As applications of Theorem~\ref{mathm01}, we get some new properties of associated polynomials.
\section{Eulerian recurrence systems and Hermite-Biehler decompositions}\label{Section02}
\subsection{The flag descent polynomials}
\hspace*{\parindent}

It follows from~\cite[Theorem 4.3]{Adin2001} that
the flag descent polynomials $C_n(x)$ satisfy the recurrence
\begin{equation}\label{AdinSnx-recu}
C_{n+1}(x)=(2nx^2+x+1)C_n(x)+x(1-x^2)C_n'(x),
\end{equation}
with $C_0(x)=1,C_1(x)=1+x$ and $C_2(x)=1+3x+3x^2+x^3$.
Set $$\alpha_n(x)=2nx^2+x+1,~\beta_n(x)=x(1-x^2).$$ Note that $\beta_n^e(x)=0$.
It follows from~\eqref{dualrelation01} that
$$x(2nx^2+x+1)=xp_n(x^2)+r_n(x^2),~2x^2(1-x^2)=q_n(x^2).$$
Hence $p_n(x)=2nx+1,~q_n(x)=2x(1-x),~r_n(x)=x$.
By Theorem~\ref{mathm01}, we see that
$$u_n(x)=2nx+1+1-x=(2n-1)x+2,~v_n(x)=2x(1-x),~w_n(x)=1.$$
By Theorem~\ref{mathm01} and Theorem~\ref{Hermite}, we get the second main result of this paper.
\begin{theorem}\label{Snx-thm}
For $n\geq1$, we have
\begin{equation}\label{Snx-recu}
\left\{
  \begin{array}{l}
    C_{n+1}^E(x)=(2nx+1)C_n^E(x)+2x(1-x)\frac{d}{dx}{C_n^E}(x)+xC_n^O(x), \\
    C_{n+1}^O(x)=((2n-1)x+2)C_n^O(x)+2x(1-x)\frac{d}{dx}C_n^O(x)+C_n^E(x),
  \end{array}
\right.
\end{equation}
with the initial conditions $C_1^E(x)=C_1^O(x)=1$. Moreover, both $C_n^E(x)$ and $C_n^O(x)$ have only real negative zeros and $C_n^O(x)\prec C_n^E(x)$.
\end{theorem}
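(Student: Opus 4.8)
The plan is to treat the two assertions of Theorem~\ref{Snx-thm} separately: first the recurrence system \eqref{Snx-recu} together with its initial data, which should fall out of Theorem~\ref{mathm01}, and then the real-rootedness and interlacing statement, which I would extract from the Hermite--Biehler theorem (Theorem~\ref{Hermite}) once I know that $C_n(x)$ is stable.

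For the recurrence system I would start from the decomposition $C_n(x)=C_n^E(x^2)+xC_n^O(x^2)$ supplied by Proposition~\ref{SnxBnx-prop} and the scalar recurrence \eqref{AdinSnx-recu}, in which $\alpha_n(x)=2nx^2+x+1$ and $\beta_n(x)=x(1-x^2)$ satisfy $\beta_n^e(x)=0$. The dual relations \eqref{dualrelation01} then force $p_n(x)=2nx+1$, $q_n(x)=2x(1-x)$, $r_n(x)=x$, and Theorem~\ref{mathm01} yields $u_n(x)=(2n-1)x+2$, $v_n(x)=2x(1-x)$, $w_n(x)=1$; substituting these into \eqref{EulerianPair} reproduces \eqref{Snx-recu}. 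The initial data is read off directly from $C_1(x)=1+x$, giving $C_1^E(x)=C_1^O(x)=1$, and as a consistency check one verifies that one step of the system recovers $C_2(x)=1+3x+3x^2+x^3$.

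For the analytic part the crucial input is the stability of $C_n(x)$. Here I would invoke \eqref{fdesAnx}, that is $C_n(x)=(1+x)^nA_n(x)$, together with the classical fact that the Eulerian polynomial $A_n(x)$ has only real, simple, negative zeros. Every zero of $C_n(x)$ is then real and negative, hence lies in $\LHP=\{z:\re z<0\}$, and since $C_n(x)$ has positive leading coefficient it is standard; thus $C_n(x)$ is stable. Because $C_n(x)=C_n^E(x^2)+xC_n^O(x^2)$ is exactly its Hermite--Biehler decomposition, Theorem~\ref{Hermite} then immediately gives that $C_n^E(x)$ and $C_n^O(x)$ are standard, have only nonpositive zeros, and satisfy $C_n^O(x)\prec C_n^E(x)$.

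The one point needing extra care is upgrading ``nonpositive'' to ``negative,'' which is slightly stronger than what Theorem~\ref{Hermite} delivers verbatim; for this it suffices to check that $0$ is not a zero of either polynomial. But $C_n^E(0)$ is the constant term of $C_n(x)$, equal to $A_n(0)=1$, while $C_n^O(0)$ is the coefficient of $x$ in $C_n(x)$, a positive integer for $n\geq1$ (it counts the signed permutations of flag descent $1$); hence neither vanishes at the origin and all zeros are strictly negative. I expect the main obstacle to be not the interlacing itself---Hermite--Biehler hands that to us once stability is secured---but rather making sure stability of $C_n(x)$ is genuinely available and non-circular; the factorization \eqref{fdesAnx} settles this cleanly, since the real-rootedness of $A_n(x)$ is independent of the present decomposition. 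As an alternative that avoids invoking the classical result, one could instead establish $C_n^O(x)\prec C_n^E(x)$ by induction on $n$ directly from the system \eqref{Snx-recu}, using standard lemmas on preservation of the relation $\prec$ under the maps $E\mapsto p_nE+q_nE'+r_nO$; I anticipate that route to be more laborious, requiring careful control of the sign and degree conditions at each step.
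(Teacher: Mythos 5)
Your proposal matches the paper's argument: the recurrence system is obtained exactly as in the paper by feeding $\alpha_n(x)=2nx^2+x+1$ and $\beta_n(x)=x(1-x^2)$ into the dual relations \eqref{dualrelation01} and Theorem~\ref{mathm01}, and the real-rootedness and interlacing statement is deduced from Theorem~\ref{Hermite}. The paper merely cites Theorems~\ref{mathm01} and~\ref{Hermite} for that second part, so your explicit verification of the stability of $C_n(x)$ via the factorization \eqref{fdesAnx} and your constant-term check excluding a zero at the origin simply supply details the paper leaves implicit.
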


Define
$$C^E(x,z)=1+\sum_{n=1}^\infty C_n^E(x)\frac{z^n}{n!},~C^O(x,z)=\sum_{n=1}^\infty C_n^O(x)\frac{z^n}{n!}.$$
By rewriting~\eqref{Snx-recu} in terms of generating functions, we have
\begin{equation}\label{Snx-recu-GF}
\left\{
  \begin{array}{l}
    (1-2xz)\frac{\partial}{\partial z}C^E(x,z)=C^E(x,z)+2x(1-x)\frac{\partial}{\partial x}C^E(x,z)+xC^O(x,z),\\
      (1-2xz)\frac{\partial}{\partial z}C^O(x,z)=(2-x)C^O(x,z)+2x(1-x)\frac{\partial}{\partial x}C^O(x,z)+C^E(x,z).
  \end{array}
\right.
\end{equation}
It is routine to check that
\begin{equation}\label{SnE-GF}
C^E(x,z)=\frac{x-e^{(x-1)z}}{x-e^{2(x-1)z}},~C^O(x,z)=\frac{1-e^{(x-1)z}}{e^{2(x-1)z}-x}.
\end{equation}

It is well known that the exponential generating functions of $A_n(x)$ and $B_n(x)$ are given as follows (see~\cite[Theorem~3.4]{Brenti94}):
\begin{equation*}\label{An-GF}
A(x,z):=\sum_{n=0}^\infty A_n(x)\frac{z^n}{n!}=\frac{x-1}{x-e^{(x-1)z}}=\frac{(1-x)e^{(1-x)z}}{1-xe^{(1-x)z}},
\end{equation*}
\begin{equation*}\label{An-GF}
B(x,z):=\sum_{n=0}^\infty B_n(x)\frac{z^n}{n!}=\frac{(1-x)e^{(1-x)z}}{1-xe^{2(1-x)z}}.
\end{equation*}

We can now present the third main result of this paper.
\begin{theorem}\label{mainthm03}
We have
$$\frac{A(x,2z)}{A(x,z)}=C^E(x,z),~\frac{B(x,z)}{A(x,z)}=1+xC^O(x,z).$$
\end{theorem}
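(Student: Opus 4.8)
The plan is to prove both identities by direct substitution of the explicit closed forms, since every function involved has already been given in closed form. For the first identity, I would start from the stated formula $A(x,z)=\frac{x-1}{x-e^{(x-1)z}}$ and replace $z$ by $2z$ to obtain $A(x,2z)=\frac{x-1}{x-e^{2(x-1)z}}$. Dividing by $A(x,z)$ cancels the common factor $x-1$ and yields
$$\frac{A(x,2z)}{A(x,z)}=\frac{x-e^{(x-1)z}}{x-e^{2(x-1)z}},$$
which is precisely the closed form for $C^E(x,z)$ recorded in~\eqref{SnE-GF}. Thus the first identity requires nothing beyond this cancellation.

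For the second identity, I would use the equivalent form $A(x,z)=\frac{(1-x)e^{(1-x)z}}{1-xe^{(1-x)z}}$ together with $B(x,z)=\frac{(1-x)e^{(1-x)z}}{1-xe^{2(1-x)z}}$. Forming the ratio cancels the factor $(1-x)e^{(1-x)z}$ and gives
$$\frac{B(x,z)}{A(x,z)}=\frac{1-xe^{(1-x)z}}{1-xe^{2(1-x)z}}.$$
To match this against $1+xC^O(x,z)$, the key step is the change of variable $t=e^{(1-x)z}$, under which $e^{(x-1)z}=t^{-1}$; then the closed form for $C^O(x,z)$ in~\eqref{SnE-GF} becomes $\frac{1-t^{-1}}{t^{-2}-x}=\frac{t(t-1)}{1-xt^2}$, so that $1+xC^O(x,z)=\frac{1-xt^2+xt^2-xt}{1-xt^2}=\frac{1-xt}{1-xt^2}$. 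This is exactly the ratio computed above, which establishes the second identity.

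Since both halves reduce to elementary rational-function manipulation, there is no serious obstacle; the only points that require care are purely bookkeeping. First, one must use the two equivalent presentations of $A(x,z)$ consistently, passing between $\frac{x-1}{x-e^{(x-1)z}}$ and $\frac{(1-x)e^{(1-x)z}}{1-xe^{(1-x)z}}$ by multiplying numerator and denominator by $e^{(1-x)z}$ and using $e^{(x-1)z}e^{(1-x)z}=1$. Second, one should keep the sign in the exponent straight, since $C^E$ and $C^O$ are written with $e^{(x-1)z}$ while $A$ and $B$ are most convenient with $e^{(1-x)z}$; the substitution $t=e^{(1-x)z}$ handles this uniformly. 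I would also note in passing that these two identities give a generating-function reinterpretation of the Hermite--Biehler decomposition $C_n(x)=C_n^E(x^2)+xC_n^O(x^2)$ from Proposition~\ref{SnxBnx-prop}, but the verification itself is self-contained given~\eqref{SnE-GF}.
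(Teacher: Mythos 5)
Your proposal is correct and follows essentially the same route as the paper: both arguments substitute the closed forms of $A(x,z)$, $B(x,z)$ and~\eqref{SnE-GF} and reduce the identities to elementary cancellation of rational functions. The only difference is cosmetic --- you make explicit, via the substitution $t=e^{(1-x)z}$, the verification of $1+xC^O(x,z)=\frac{1-xe^{(1-x)z}}{1-xe^{2(1-x)z}}$, which the paper simply asserts.
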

\begin{proof}
Note that
$$A(x,2z)=\frac{x-1}{x-e^{2(x-1)z}}=\frac{x-e^{(x-1)z}}{x-e^{2(x-1)z}}\frac{x-1}{x-e^{(x-1)z}}=C^E(x,z)A(x,z),$$
which yields the first formula. Since
\begin{equation*}
1+xC^O(x,z)=\frac{1-xe^{(1-x)z}}{1-xe^{2(1-x)z}},
\end{equation*}
it follows that
$$B(x,z)=\frac{(1-x)e^{(1-x)z}}{1-xe^{2(1-x)z}}=\frac{1-xe^{(1-x)z}}{1-xe^{2(1-x)z}}\frac{(1-x)e^{(1-x)z}}{1-xe^{(1-x)z}}=\left(1+xC^O(x,z)\right)A(x,z).$$
This completes the proof.
\end{proof}
An immediate consequence of Theorem~\ref{mainthm03} is the following corollary.
\begin{corollary}
For $n\geq 0$, we have
$$2^nA_n(x)=\sum_{k=0}^n\binom{n}{k}A_k(x)C_{n-k}^E(x),$$
$$B_n(x)=A_n(x)+x\sum_{k=0}^{n-1}\binom{n}{k}A_k(x)C_{n-k}^O(x).$$
\end{corollary}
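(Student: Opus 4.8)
The plan is to read off both identities directly from the two generating-function equalities of Theorem~\ref{mainthm03} by extracting the coefficient of $z^n/n!$ on each side, using the standard binomial (exponential) convolution for products of exponential generating functions.

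First I would normalize the two series so that their constant terms are explicit. Since $C^E(x,z)=1+\sum_{n\geq 1}C_n^E(x)z^n/n!$, it is natural to set $C_0^E(x)=1$ and write $C^E(x,z)=\sum_{n\geq 0}C_n^E(x)z^n/n!$; similarly, since $C^O(x,z)=\sum_{n\geq 1}C_n^O(x)z^n/n!$ has vanishing constant term, I set $C_0^O(x)=0$. With this convention, the coefficient of $z^n/n!$ in a product of two such exponential generating functions is the binomial convolution of the two coefficient sequences.

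For the first identity I would start from $A(x,2z)=C^E(x,z)A(x,z)$. On the left, $A(x,2z)=\sum_{n\geq 0}2^nA_n(x)z^n/n!$, so the coefficient of $z^n/n!$ is $2^nA_n(x)$; on the right the binomial convolution produces $\sum_{k=0}^n\binom{n}{k}A_k(x)C_{n-k}^E(x)$, and equating the two gives the first formula. For the second identity I would rewrite $B(x,z)/A(x,z)=1+xC^O(x,z)$ as $B(x,z)=A(x,z)+xC^O(x,z)A(x,z)$; extracting the coefficient of $z^n/n!$ yields $B_n(x)=A_n(x)+x\sum_{k=0}^n\binom{n}{k}A_k(x)C_{n-k}^O(x)$, and since $C_0^O(x)=0$ the $k=n$ term drops out, so the sum truncates to $k=0,\dots,n-1$, exactly matching the statement.

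There is essentially no hard step here: the only points requiring care are the bookkeeping of the constant terms—recording $C_0^E(x)=1$ and $C_0^O(x)=0$—and tracking the factor $2^n$ introduced by the dilation $z\mapsto 2z$ in $A(x,2z)$. Everything else is the routine comparison of coefficients in an exponential-generating-function identity.
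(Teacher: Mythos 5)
Your proof is correct and matches the paper's (implicit) argument: the paper states the corollary as an immediate consequence of Theorem~\ref{mainthm03}, and the intended derivation is exactly the coefficient extraction via binomial convolution that you carry out, including the conventions $C_0^E(x)=1$ and $C_0^O(x)=0$ that make the sums come out as stated.
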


In~\cite{Bagno04}, Bagno and Garber introduced the definition of flag excedance of signed permutations.
Let $\pi\in\mbn$.
The {\it flag excedance number} of $\pi$ is defined by
$$\fexc(\pi):=2\#\{i\in [n]:~\pi(i)>i\}+\#\{i\in [n]:~\pi(i)<0\}.$$
Let $\exc_A(\pi)=\#\{i\in [n]:~\pi(i)>i\}$.
Then $\fexc(\pi)=2\exc_A(\pi)+\negg(\pi)$.
It is well known that
\begin{equation}\label{fdesfexc}
\sum_{\pi\in\mbn}x^{\fdes(\pi)}=\sum_{\pi\in\mbn}x^{\fexc(\pi)},
\end{equation}
which has been extended to colored permutations and affine Weyl groups.
Ordinary and $q$-generalizations of~\eqref{fdesfexc} have been pursued by several authors, see~\cite{Foata11,Mongelli13,Zeng16} for instance.
For example, Mongelli~\cite{Mongelli13} derived some basic properties of the flag excedance polynomials of classical and affine Weyl groups. In particular,
Mongelli~\cite[p.~1221-1222]{Mongelli13} presented a combinatorial proof of~\eqref{fdesfexc}. Recall that $\mdn$ is the Coxeter group of type $D$.
According to~\cite[Corollary~5.1]{Mongelli13},
\begin{equation}\label{Dnfexc}
\sum_{\pi\in\mdn}x^{\fexc(\pi)}=\frac{1}{2}\left((1+x)^nA_n(x)+(1-x)^nA_n(-x)\right).
\end{equation}

By~\eqref{fdesfexc}, we see that $$C_n(x)=\sum_{\pi\in\mdn}x^{\fexc(\pi)}+\sum_{\pi\in\mbn\setminus\mdn}x^{\fexc(\pi)}.$$
Let $C(x,z):=\sum_{n=0}^\infty C_n(x)\frac{z^n}{n!}$. It follows from~\eqref{fdesAnx} that $C(x,z)=A(x,(1+x)z)$.
By using~\eqref{Dnfexc}, we immediately get
\begin{equation*}
\sum_{n=0}^\infty \sum_{\pi\in\mdn}x^{\fexc(\pi)}\frac{z^n}{n!}=\frac{1}{2}\left(C(x,z)+C(-x,z)\right)=C^E(x^2,z).
\end{equation*}
Thus
\begin{equation*}
\sum_{n=0}^\infty \sum_{\pi\in\mbn\setminus \mdn}x^{\fexc(\pi)}\frac{z^n}{n!}=xC^O(x^2,z).
\end{equation*}
So we get the following result, which gives a connection between Coxeter groups $\mbn$ and $\mdn$.
\begin{proposition}\label{CnxBnxdes}
Let $C_n^+=\{\pi\in\mbn:~\pi(1)>0\}$. Then we have
\begin{equation*}
\sum_{\pi\in\mdn}x^{\fexc(\pi)}=\sum_{\pi\in C_n^+}x^{2\des_A(\pi)}.
\end{equation*}
\end{proposition}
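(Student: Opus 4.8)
The plan is to read the claim off the generating-function identity established immediately before the statement, since the right-hand side is, by definition, nothing but $C_n^E(x^2)$. First I would observe that because $C_n^E(x)=\sum_{\pi\in C_n^+}x^{\des_A(\pi)}$ (the specialization $q=1$ of $C_n^E(x,q)$), substituting $x\mapsto x^2$ gives
\[
C_n^E(x^2)=\sum_{\pi\in C_n^+}x^{2\des_A(\pi)},
\]
which is exactly the right-hand side of the proposition. Thus it suffices to prove $\sum_{\pi\in\mdn}x^{\fexc(\pi)}=C_n^E(x^2)$.

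Next I would extract the coefficient of $z^n/n!$ from the identity
\[
\sum_{n=0}^\infty\sum_{\pi\in\mdn}x^{\fexc(\pi)}\frac{z^n}{n!}=C^E(x^2,z)
\]
derived in the paragraph preceding the proposition. Since $C^E(x,z)=1+\sum_{n\geq 1}C_n^E(x)z^n/n!$, we have $C^E(x^2,z)=1+\sum_{n\geq 1}C_n^E(x^2)z^n/n!$, so comparing coefficients of $z^n/n!$ yields $\sum_{\pi\in\mdn}x^{\fexc(\pi)}=C_n^E(x^2)$ for every $n$, completing the argument.

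For a self-contained route that does not quote the generating function, I would instead work at fixed $n$. Mongelli's formula \eqref{Dnfexc} together with \eqref{fdesAnx} gives $\sum_{\pi\in\mdn}x^{\fexc(\pi)}=\frac{1}{2}\big((1+x)^nA_n(x)+(1-x)^nA_n(-x)\big)=\frac{1}{2}\big(C_n(x)+C_n(-x)\big)$, so this sum is precisely the even-in-$x$ part of the flag descent polynomial $C_n(x)=\sum_{\pi\in\mbn}x^{\fdes(\pi)}$. On the other hand, since $\fdes(\pi)=2\des_A(\pi)$ is even when $\pi(1)>0$ and $\fdes(\pi)=2\des_A(\pi)+1$ is odd when $\pi(1)<0$, the even part of $C_n(x)$ collects exactly the terms with $\pi\in C_n^+$, that is $\sum_{\pi\in C_n^+}x^{2\des_A(\pi)}=C_n^E(x^2)$, which is the content of Proposition~\ref{SnxBnx-prop}. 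The two evaluations of the even part agree, giving the claim.

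There is no serious obstacle here: the statement is a coefficientwise restatement of an already-established generating-function equality. The only point requiring care is the parity bookkeeping, namely verifying that $\frac{1}{2}(C_n(x)+C_n(-x))$ selects the even-degree part and that this even part matches the subset $C_n^+$ through the flag descent parity; both facts are immediate from the definition of $\fdes$ and the Hermite--Biehler-type splitting in Proposition~\ref{SnxBnx-prop}.
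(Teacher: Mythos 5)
Your proof is correct and follows essentially the same route as the paper: it combines Mongelli's formula \eqref{Dnfexc} with \eqref{fdesAnx} to identify $\sum_{\pi\in\mdn}x^{\fexc(\pi)}$ with the even part $\tfrac12(C_n(x)+C_n(-x))=C_n^E(x^2)$, and then matches that with $\sum_{\pi\in C_n^+}x^{2\des_A(\pi)}$ via the parity of $\fdes$ as in Proposition~\ref{SnxBnx-prop}. The generating-function and fixed-$n$ versions you give are just two packagings of the paper's own preceding derivation, so nothing further is needed.
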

A combinatorial proof of Proposition~\ref{CnxBnxdes} would be interesting.
\subsection{Flag ascent-plateau polynomials for Stirling permutations}
\hspace*{\parindent}

Stirling permutations were introduced by Gessel and Stanley~\cite{Gessel78}.
A {\it Stirling permutation} of order $n$ is a permutation of the multiset $\{1,1,2,2,\ldots,n,n\}$ such that
for each $i$, $1\leq i\leq n$, all entries between the two occurrences of $i$ are larger than $i$.
The reader is referred to~\cite{Bona08,Haglund12,Ma17,Ma2001} for the recent study on Stirling permutations.

Denote by $\mqn$ the set of {\it Stirling permutations} of order $n$.
Let $\sigma=\sigma_1\sigma_2\cdots\sigma_{2n}\in\mqn$.
The {\it ascent-plateau number}, {\it left ascent-plateau number} and {\it flag ascent-plateau number} of $\sigma$ are respectively defined by
\begin{align*}
\ap(\pi)&=\#\{i\in[2,2n-1]:~\pi(i-1)<\pi(i)=\pi(i+1)\},\\
\lap(\pi)&=\#\{i\in[2n-1]:~\pi(i-1)<\pi(i)=\pi(i+1),~\pi(0)=0\},\\
\fap(\pi)&=\left\{
               \begin{array}{ll}
                 2\ap(\sigma)+1, & \hbox{if $\sigma_1=\sigma_2$;} \\
                 2\ap(\sigma), & \hbox{otherwise.}
               \end{array}
             \right.
\end{align*}
Clearly, $\fap(\sigma)=\ap(\sigma)+\lap(\sigma)$.
The {\it flag ascent-plateau polynomials} $L_n(x)$ are defined by $$L_n(x)=\sum_{\sigma\in\mqn}x^{\fap(\sigma)}.$$
The polynomials $L_n(x)$ satisfy the recurrence relation
\begin{equation}\label{Fnx-recu}
L_{n+1}(x)=(x+2nx^2)L_n(x)+x(1-x^2)L_n'(x),
\end{equation}
with $L_0(x)=1$ (see~\cite[p.~14]{Ma2001}). The first few $L_n(x)$ are
\begin{align*}
L_1(x)&=x,\\
L_2(x)&=x+x^2+x^3,\\
L_3(x)&=x+3x^2+7x^3+3x^4+x^5.
\end{align*}
Set $\mqn=\mqn^+\cup\mqn^-$, where $\mqn^+=\{\sigma\in\mqn:~\sigma_1<\sigma_2\}$ and $\mqn^-=\{\sigma\in\mqn:~\sigma_1=\sigma_2\}$.
Then
\begin{equation}\label{Lnx-express}
L_n(x)=L_n^E(x^2)+xL_n^O(x^2)=\sum_{\sigma\in\mqn^+}x^{2\ap(\sigma)}+x\sum_{\sigma\in\mqn^-}x^{2\ap(\sigma)}.
\end{equation}

From~\eqref{Fnx-recu}, we see that $\alpha_n(x)=x+2nx^2,~\beta_n(x)=x(1-x^2)$ and $\beta_n^e(x)=0$.
Using~\eqref{dualrelation01}, we obtain $x(x+2nx^2)=xp_n(x^2)+r_n(x^2)$ and $2x^2(1-x^2)=q_n(x^2)$.
Hence $$p_n(x)=2nx,~q_n(x)=2x(1-x),~r_n(x)=x,$$
$$u_n(x)=2nx+1-x=(2n-1)x+1,~v_n(x)=2x(1-x),~w_n(x)=1.$$
Therefore, by Theorem~\ref{mathm01}, we can now present the fouth main result of this paper.
\begin{theorem}
For $n\geq1$, we have
\begin{equation}\label{Tnx-recu}
\left\{
  \begin{array}{l}
    L_{n+1}^E(x)=2nxL_n^E(x)+2x(1-x)\frac{d}{dx}{L_n^E}(x)+xL_n^O(x), \\
    L_{n+1}^O(x)=((2n-1)x+1)L_n^O(x)+2x(1-x)\frac{d}{dx}L_n^O(x)+L_n^E(x),
  \end{array}
\right.
\end{equation}
with the initial conditions $L_1^E(x)=0$ and $L_1^O(x)=1$.
\end{theorem}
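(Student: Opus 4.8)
The plan is to obtain~\eqref{Tnx-recu} as a direct instance of Theorem~\ref{mathm01}, whose coefficient identities were already verified for this family in the paragraph preceding the statement. By the very definitions of $f^E$ and $f^O$, the decomposition $L_n(x)=L_n^E(x^2)+xL_n^O(x^2)$ holds identically for every $n$, and we are given that $L_n(x)$ satisfies the single recurrence~\eqref{Fnx-recu}, an instance of~\eqref{Eulerian03} with $\alpha_n(x)=x+2nx^2$ and $\beta_n(x)=x(1-x^2)$. Setting $F_n=L_n$, $E_n=L_n^E$ and $O_n=L_n^O$ thus places us inside the framework of Theorem~\ref{mathm01}, so it only remains to match coefficients and to verify the base case.

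First I would check the hypothesis $\beta_n^e(x)=0$: since $\beta_n(x)=x-x^3$ has only odd-degree monomials, its even part vanishes. This is exactly the condition under which the expansion of $\alpha_n(x)L_n(x)+\beta_n(x)L_n'(x)$ separates into the two individual lines of~\eqref{EulerianPair} without spurious cross-derivative terms (a $\frac{d}{dx}L_n^O$ contribution to the $L_n^E$ equation, or a $\frac{d}{dx}L_n^E$ contribution to the $L_n^O$ equation). Next I would extract $\alpha_n^e(x)=2nx^2$, $\alpha_n^o(x)=x$ and $\beta_n^o(x)=x(1-x^2)$, substitute them into the system~\eqref{EulerianPair02} obtained inside the proof of Theorem~\ref{mathm01}, and replace $x^2$ by $x$; the two resulting lines are precisely~\eqref{Tnx-recu}, the $\frac1x$ factors in the odd equation collapsing to polynomials because $\alpha_n^o$ and $\beta_n^o$ are divisible by $x$. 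For instance the even line reads
$$L_{n+1}^E(x)=2nxL_n^E(x)+2x(1-x)\frac{d}{dx}L_n^E(x)+xL_n^O(x).$$
Equivalently I could simply insert the values $p_n(x)=2nx$, $q_n(x)=2x(1-x)$, $r_n(x)=x$, $u_n(x)=(2n-1)x+1$, $v_n(x)=2x(1-x)$ and $w_n(x)=1$ already computed above into~\eqref{EulerianPair}.

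Finally I would dispose of the initial conditions by inspection: from $L_1(x)=x$ the coefficient of $x^0$ is $0$ and that of $x^1$ is $1$, whence $L_1^E(x)=0$ and $L_1^O(x)=1$, as asserted. I do not anticipate a genuine obstacle, since all of the recursive content is inherited from Theorem~\ref{mathm01}; the two points that call for care are confirming $\beta_n^e(x)=0$ (so that the system separates cleanly) and keeping the bookkeeping operators $f^E,f^O,f^e,f^o$ straight while passing between the variables $x$ and $x^2$. Should one prefer not to cite the theorem as a black box, an equally short route is to substitute $L_n(x)=L_n^E(x^2)+xL_n^O(x^2)$ directly into~\eqref{Fnx-recu}, differentiate, and split the resulting identity into its even and odd parts in $x$, which reproduces~\eqref{Tnx-recu}.
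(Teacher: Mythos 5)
Your proposal is correct and follows essentially the same route as the paper: the coefficients $p_n,q_n,r_n,u_n,v_n,w_n$ are read off from~\eqref{Fnx-recu} via~\eqref{dualrelation01} (after checking $\beta_n^e(x)=0$), Theorem~\ref{mathm01} (or, more precisely, the system~\eqref{EulerianPair02} from its proof) yields the two recurrences, and the initial conditions follow from $L_1(x)=x$. Your remark about using~\eqref{EulerianPair02} rather than the bare ``if and only if'' statement is a sensible precaution about the direction of implication, but it does not change the argument.
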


The {\it ascent-plateau polynomials} and {\it left ascent-plateau polynomials} are defined by
$$M_n(x)=\sum_{\sigma\in\mqn}x^{\ap(\pi)},~N_n(x)=\sum_{\sigma\in\mqn}x^{\lap(\pi)}.$$
According to~\cite[Theorem 2,~Theorem 3]{Ma15}, we have
\begin{equation*}\label{Mxt}
M(x,t)=\sum_{n\geq 0}M_n(x)\frac{t^n}{n!}=\sqrt{\frac{x-1}{x-e^{2t(x-1)}}},
\end{equation*}
\begin{equation*}\label{Nxt}
N(x,t)=\sum_{n\geq 0}N_n(x)\frac{t^n}{n!}=\sqrt{\frac{1-x}{1-xe^{2t(1-x)}}}.
\end{equation*}
Clearly, $M_n(x)=x^nN_n\left(\frac{1}{x}\right)$. Moreover, according to~\cite[Proposition~1]{Ma17}, we have
\begin{align*}
2^nxA_n(x)&=\sum_{i=0}^n\binom{n}{i}N_i(x)N_{n-i}(x),\\
B_n(x)&=\sum_{i=0}^n\binom{n}{i}N_i(x)M_{n-i}(x).
\end{align*}
It is well known that $A_n(x)$ and $B_n(x)$ are both unimodal (see~\cite{Athanasiadis17} for instance).
Motivated by the above convolution formulas, it is natural to explore the unimodality of $M_n(x)$ and $N_n(x)$.

Let $f(x)=\sum_{i=0}^nf_ix^i$.
We say that $f(x)$ is {\it unimodal} if there exists an index $0\leq k\leq n$ such that $f_0\leq f_1\leq \cdots\leq f_k\geq f_{k+1}\geq\cdots \geq f_n$.
Such an index $k$ is called a {\it mode} of $f(x)$. In the past decades, it is a fundamental problem to determine the location of modes of a sequence
of combinatorial polynomials, see~\cite{Chen09,Wang2005} for instance.
Following~\cite[Definition~2.9]{Schepers13},
we say that $f(x)$ is {\it alternatingly increasing} if
$$f_0\leq f_n\leq f_1\leq f_{n-1}\leq\cdots f_{\lrf{\frac{n+1}{2}}}.$$
Clearly, alternatingly increasing property is a stronger property than unimodality. Very recently,
there has been much work on the alternatingly increasing property of Ehrhart polynomials (see~\cite{Beck2019,Branden18,Solus19}).
In the rest part of this subsection, we shall show that
both $M_n(x)$ and $N_n(x)$ are alternatingly increasing.

If $f(x)$ is symmetric and $\deg f(x)=n$, then it can be expanded as
$$f(x)=\sum_{k=0}^{\lrf{{n}/{2}}}\gamma_kx^k(1+x)^{n-2k},$$ and it is said to be {\it $\gamma$-positive}
if $\gamma_k\geq 0$ for $0\leq k\leq \lrf{\frac{n}{2}}$.
If $f(x)$ is $\gamma$-positive, then $f(x)$ is unomidal and symmetric.
Following~\cite[Definition~15]{Ma2001},
if $$f(x)=(1+x)^\nu\sum_{k=0}^{n}\lambda_kx^k(1+x^2)^{n-k}$$ and $\lambda_k\geq 0$ for all $0\leq k\leq n$,
then we say that $f(x)$ is {\it semi-$\gamma$-positive}, where $\nu=0$ or $\nu=1$.
Therefore, if $f(x)$ is semi-$\gamma$-positive, then
\begin{align*}
f(x)&=(1+x)^\nu\left(\sum_{k=0}^{\lrf{n/2}}\lambda_{2k}x^{2k}(1+x^2)^{n-2k}+\sum_{k=0}^{\lrf{(n-1)/2}}\lambda_{2k+1}x^{2k+1}(1+x^2)^{n-1-2k}\right)\\
&=(1+x)^\nu(f_1(x^2)+xf_2(x^2)),
\end{align*}
and both $f_1(x)$ and $f_2(x)$ are $\gamma$-positive (see~\cite[Proposition~16]{Ma2001}).

We now recall a very recent result on the flag ascent-plateau polynomials.
\begin{proposition}[{\cite[Theorem~19]{Ma2001}}]\label{Lnx-prop}
The polynomial $L_n(x)$ is semi-$\gamma$-positive. More precisely, for $n\geq 1$,
we have $$L_n(x)=\sum_{k=1}^nL_{n,k}x^k(1+x^2)^{n-k},$$
where the numbers $L_{n,k}$ satisfy the recurrence relation
\begin{equation}\label{Lnk-recu}
L_{n+1,k}=kL_{n,k}+L_{n,k-1}+4(n-k+2)L_{n,k-2},
\end{equation}
with the initial conditions $L_{0,0}=1$ and $L_{n,0}=0$ for $n\geq 1$.
\end{proposition}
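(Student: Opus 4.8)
The plan is to work entirely in the basis $g_{n,k}(x):=x^{k}(1+x^{2})^{n-k}$ and to understand how the linear operator
$$T_{n}[f]:=(x+2nx^{2})\,f+x(1-x^{2})\,f'$$
coming from the recurrence~\eqref{Fnx-recu} acts on it. The whole proposition reduces to a single operator identity: I would first show that
$$T_{n}[g_{n,k}(x)] = k\,g_{n+1,k}(x)+g_{n+1,k+1}(x)+4(n-k)\,g_{n+1,k+2}(x).$$
Note that the $g_{n+1,j}(x)=x^{j}(1+x^{2})^{n+1-j}$ have pairwise distinct degrees $2(n+1)-j$, so they are linearly independent and coefficient extraction against them is unambiguous. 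Granting the identity, the proposition follows by induction on $n$: writing $L_{n}(x)=\sum_{k}L_{n,k}\,g_{n,k}(x)$ and using $L_{n+1}(x)=T_{n}[L_{n}(x)]$, the coefficient of $g_{n+1,k}(x)$ collects exactly three contributions, coming from $g_{n,k}$, $g_{n,k-1}$ and $g_{n,k-2}$, and these read off as $L_{n+1,k}=kL_{n,k}+L_{n,k-1}+4(n-k+2)L_{n,k-2}$, which is precisely~\eqref{Lnk-recu}. The base case is the direct check $L_{1}(x)=x=g_{1,1}(x)$, giving $L_{1,1}=1$ in agreement with the recurrence started from $L_{0,0}=1$.

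To establish the operator identity I would compute
$$g_{n,k}'(x)=k\,x^{k-1}(1+x^{2})^{n-k}+2(n-k)\,x^{k+1}(1+x^{2})^{n-k-1},$$
substitute into $T_{n}$, and systematically rewrite $1-x^{2}=(1+x^{2})-2x^{2}$ so that every product becomes a sum of monomials of the normal form $x^{a}(1+x^{2})^{b}$. After this rewriting, the contribution $k\,x^{k}(1+x^{2})^{n-k+1}$ is already $k\,g_{n+1,k}(x)$, and the contribution $x^{k+1}(1+x^{2})^{n-k}$ is already $g_{n+1,k+1}(x)$. The genuinely nontrivial point is the leftover degree-raising terms: separately they look like $x^{k+2}(1+x^{2})^{n-k}$ and $x^{k+4}(1+x^{2})^{n-k-1}$ and neither individually lies in the span of the level-$(n+1)$ basis. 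One checks that the three $x^{k+2}(1+x^{2})^{n-k}$ pieces (carrying coefficients $2n$, $-2k$ and $2(n-k)$) sum to $4(n-k)$, and then the pair combines as
$$4(n-k)\,x^{k+2}(1+x^{2})^{n-k-1}\bigl[(1+x^{2})-x^{2}\bigr]=4(n-k)\,x^{k+2}(1+x^{2})^{n-k-1},$$
collapsing through $(1+x^{2})-x^{2}=1$ into the single clean summand $4(n-k)\,g_{n+1,k+2}(x)$. This cancellation is the crux of the computation.

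Finally, semi-$\gamma$-positivity (with $\nu=0$) is immediate once the recurrence is in hand: the three coefficients $k$, $1$ and $4(n-k+2)$ are nonnegative on the relevant ranges, so induction gives $L_{n,k}\ge 0$; and reading the recurrence together with $L_{0,0}=1$, $L_{n,0}=0$ shows $L_{n,k}=0$ unless $1\le k\le n$, which pins down the summation range $\sum_{k=1}^{n}$. I expect the main obstacle to be organizational rather than conceptual: the key is to guess that $1-x^{2}=(1+x^{2})-2x^{2}$ is the correct normal form and then to track the signs and powers carefully enough to witness the $(1+x^{2})-x^{2}=1$ collapse in the operator identity. Once that identity is correctly stated and verified, both the induction yielding~\eqref{Lnk-recu} and the nonnegativity giving semi-$\gamma$-positivity are routine.
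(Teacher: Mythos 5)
Your proposal is correct. Note that the paper does not actually prove this proposition; it imports it verbatim as \cite[Theorem~19]{Ma2001}, so there is no internal proof to compare against. Your argument is the natural self-contained one: I checked the operator identity $T_n[g_{n,k}]=k\,g_{n+1,k}+g_{n+1,k+1}+4(n-k)\,g_{n+1,k+2}$ and it holds exactly as you describe (the three coefficients $2n$, $-2k$, $2(n-k)$ do sum to $4(n-k)$, and the $(1+x^2)-x^2=1$ collapse is right), the distinct degrees $2(n+1)-j$ justify coefficient extraction, and shifting $k\mapsto k-2$ in the last term gives the factor $4(n-k+2)$ of~\eqref{Lnk-recu}. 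The only point worth making explicit is the boundary case $k=n$, where $g_{n+1,k+2}$ would not be a polynomial; this is harmless precisely because its coefficient $4(n-k)$ vanishes there, and the same vanishing at $k=n+2$ in~\eqref{Lnk-recu} is what confines the support to $1\leq k\leq n$ and hence yields semi-$\gamma$-positivity with $\nu=0$.
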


It follows from~\eqref{Lnx-express} that
\begin{equation}\label{MnxNnx}
M_n(x)=L_n^E(x)+L_n^O(x),~N_n(x)=L_n^E(x)+xL_n^O(x).
\end{equation}
Since $\deg L_n(x)=2n-1$, we have $\deg L_n^E(x)=\deg L_n^O(x)=n-1$.
Note that $L_n^E(0)=0$ and $L_n^O(0)=1$.
By using Proposition~\ref{Lnx-prop}, we get that both $L_n^E(x)$ and $L_n^O(x)$ are $\gamma$-positive for any $n\geq 1$.
More precisely, we have
\begin{equation}\label{LnELnO}
\left\{
  \begin{array}{l}
L_n^E(x)=\sum_{\sigma\in\mqn^+}x^{\ap(\sigma)}=\sum_{k=1}^{\lrf{n/2}} L_{n,2k} x^k(1+x)^{n-2k},, \\
L_n^O(x)=\sum_{\sigma\in\mqn^-}x^{\ap(\sigma)}=\sum_{k=0}^{\lrf{(n-1)/2}} L_{n,2k+1}x^k(1+x)^{n-1-2k}.
  \end{array}
\right.
\end{equation}
%
In conclusion, we now present the fifth main result of this paper.
\begin{theorem}
For any $n\geq 1$, both $M_n(x)$ and $N_n(x)$ are alternatingly increasing, and so they are unimodal with modes in the middle.
\end{theorem}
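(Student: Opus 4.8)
The plan is to recognize the two identities in \eqref{MnxNnx} as the \emph{symmetric decompositions} of $N_n(x)$ and $M_n(x)$, and then to invoke the standard criterion that a polynomial whose symmetric-decomposition parts are both nonnegative and unimodal is alternatingly increasing. First I would record the structural input from Proposition~\ref{Lnx-prop} and \eqref{LnELnO}. From the $\gamma$-expansions there, $L_n^O(x)=\sum_{k} L_{n,2k+1}x^k(1+x)^{n-1-2k}$ is a symmetric, $\gamma$-positive polynomial of degree $n-1$ (center $(n-1)/2$), while $L_n^E(x)=\sum_{k\ge1}L_{n,2k}x^k(1+x)^{n-2k}=x\,h_n(x)$, where $h_n(x)=\sum_{j\ge0}L_{n,2j+2}x^j(1+x)^{(n-2)-2j}$ is symmetric and $\gamma$-positive of degree $n-2$. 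Since $L_n^E(0)=0$ and $\deg L_n^E=n-1$, the factorization $L_n^E=x\,h_n$ forces $x^nL_n^E(1/x)=L_n^E(x)$, so $L_n^E$ is symmetric \emph{with respect to} $n$. In particular $L_n^E$, $L_n^O$ and $h_n$ all have nonnegative coefficients and are unimodal.

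Next I would verify that \eqref{MnxNnx} are exactly the symmetric decompositions. For $N_n$, which has degree $n$, a one-line computation using $M_n(x)=x^nN_n(1/x)$ gives $N_n(x)-xM_n(x)=(1-x)L_n^E(x)$ and $M_n(x)-N_n(x)=(1-x)L_n^O(x)$; dividing by $1-x$ identifies the degree-$n$ symmetric part as $L_n^E$ and the degree-$(n-1)$ part as $L_n^O$, so $N_n=L_n^E+x\,L_n^O$ is the symmetric decomposition of $N_n$. For $M_n$, which has degree $n-1$, the analogous bookkeeping (now $x^nM_n(1/x)=N_n(x)$ and $x^{n-1}M_n(1/x)=N_n(x)/x$) yields $M_n=L_n^O+x\,h_n$, whose symmetric parts are $L_n^O$ of degree $n-1$ and $h_n$ of degree $n-2$.

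Finally I would apply the criterion: if both parts of a symmetric decomposition are nonnegative and unimodal, the polynomial is alternatingly increasing. This reduces to the index chain $h_0\le h_n\le h_1\le h_{n-1}\le\cdots$, where each comparison $h_{i+1}\ge h_{n-i}$ reduces to $a_{i+1}\ge a_i$ and each $h_{n-i}\ge h_i$ reduces to $b_i\ge b_{i-1}$, both holding on the increasing (left) halves by unimodality of the two symmetric parts (see~\cite{Branden18,Schepers13}). Since $L_n^E$, $L_n^O$ and $h_n$ are $\gamma$-positive, hence nonnegative and unimodal, both $N_n(x)$ and $M_n(x)$ are alternatingly increasing; and an alternatingly increasing polynomial of degree $d$ is unimodal with mode at the central index $\lrf{(d+1)/2}$, which is exactly ``modes in the middle''.

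The main obstacle is the bookkeeping of symmetry centers. One must check that $L_n^E$, although of degree $n-1$, is symmetric with respect to $n$ so that it can serve as the degree-$n$ symmetric part of $N_n$, and separately that the factorization $L_n^E=x\,h_n$ produces the correct degree-$(n-2)$ part of $M_n$. Because $M_n$ and $N_n$ have different degrees ($n-1$ and $n$) and are reverses of one another via $M_n(x)=x^nN_n(1/x)$, the reverse of an alternatingly increasing polynomial need not be alternatingly increasing, so neither case follows formally from the other and both symmetric decompositions must be exhibited explicitly.
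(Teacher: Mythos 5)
Your argument is correct and follows essentially the same route as the paper: both start from the decomposition \eqref{MnxNnx}, use the $\gamma$-positivity of $L_n^E(x)$ and $L_n^O(x)$ from Proposition~\ref{Lnx-prop} and \eqref{LnELnO} to get symmetry and unimodality of the two parts, and then deduce the alternating-increase chain. The only difference is packaging: you identify $N_n=L_n^E+xL_n^O$ and $M_n=L_n^O+xh_n$ as canonical symmetric decompositions and invoke the general criterion of \cite{Branden18,Schepers13}, whereas the paper verifies the same chain of coefficient inequalities by hand for $n$ odd and even.
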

\begin{proof}
It follows from~\eqref{LnELnO} that both $L_n^E(x)$ and $L_n^O(x)$ are symmetric and unimodal. When $n=2m+1$,
assume that
\begin{align*}
L_{2m+1}^E(x)&=\ell_1x+\ell_2x^2+\cdots+\ell_{m-1}x^{m-1}+\ell_mx^m+\ell_{m}x^{m+1}+\ell_{m-1}x^{m+2}+\cdots \ell_2x^{2m-1}+\ell_1x^{2m},\\
L_{2m+1}^O(x)&=1+\widetilde{\ell}_1x+\widetilde{\ell}_2x^2+\cdots+\widetilde{\ell}_{m-1}x^{m-1}+\widetilde{\ell}_mx^m+\widetilde{\ell}_{m-1}x^{m+1}+\cdots+\widetilde{\ell}_1x^{2m-1}+x^{2m}.
\end{align*}
Then $M_{2m+1}(x)=L_{2m+1}^E(x)+L_{2m+1}^O(x)=\sum_{i=0}^{2m}M_{2m+1,i}x^i$,
where
$$M_{2m+1,i}=\left\{
  \begin{array}{ll}
    1, & \hbox{if $i=0$;} \\
    \ell_i+\widetilde{\ell}_i, & \hbox{if $1\leq i\leq m$;} \\
    \ell_{2m-i+1}+\widetilde{\ell}_{2m-i}, & \hbox{if $m+1\leq i\leq 2m-1$;} \\
    \ell_1+1, & \hbox{if $i=2m$.}
  \end{array}
\right.$$
It is clear that $1\leq \ell_1+1\leq \ell_1+\widetilde{\ell}_1\leq \ell_2+\widetilde{\ell}_1\leq \cdots\leq \ell_m+\widetilde{\ell}_m$, i.e.,
$$M_{2m+1,0}\leq M_{2m+1,2m}\leq M_{2m+1,1}\leq M_{2m+1,2m-1}\leq \cdots\leq M_{2m+1,m}.$$
Note that $N_{2m+1}(x)=L_{2m+1}^E(x)+xL_{2m+1}^O(x)=\sum_{i=0}^{2m+1}N_{2m+1,i}x^i$,
where
$$N_{2m+1,i}=\left\{
  \begin{array}{ll}
    0, & \hbox{if $i=0$;} \\
    \ell_1+1, & \hbox{if $i=1$;} \\
    \ell_i+\widetilde{\ell}_{i-1}, & \hbox{if $2\leq i\leq m$;} \\
    \ell_{2m-i+1}+\widetilde{\ell}_{2m-i+1}, & \hbox{if $m+1\leq i\leq 2m$;} \\
    1, & \hbox{if $i=2m+1$.}
  \end{array}
\right.$$
It is clear that $0<1\leq \ell_1+1\leq \ell_1+\widetilde{\ell}_1\leq \ell_2+\widetilde{\ell}_1\leq \cdots\leq \ell_m+\widetilde{\ell}_m$, i.e.,
$$N_{2m+1,0}\leq N_{2m+1,2m+1}\leq N_{2m+1,1}\leq N_{2m+1,2m}\leq \cdots\leq N_{2m+1,m+1}.$$
Therefore, both $M_{2m+1}(x)$ and $N_{2m+1}(x)$ are alternatingly increasing, the mode of $M_{2m+1}(x)$ is $m$ and that of $N_{2m+1}(x)$ is $m+1$.
In the same way, one can verify that
both $M_{2m}(x)$ and $N_{2m}(x)$ are alternatingly increasing with the mode $m$. This completes the proof.
\end{proof}
\subsection{The up-down run polynomials for symmetric group}
\hspace*{\parindent}

Let $\pi\in\msn$. The {\it up-down runs} of a permutation $\pi\in\msn$ are the alternating runs of $\pi$ endowed with a 0
in the front~(see~\cite{Taylor03,Ma2001}). Let $\udrun(\pi)$ denote the number of up-down runs of $\pi$.
The {\it interior peak} number and {\it left peak} number of $\pi$ are respectively defined by
\begin{align*}
\ipk(\pi)&=\#\{i\in[2,n-1]:~\pi(i-1)<\pi(i)>\pi(i+1)\},\\
\lpk(\pi)&=\#\{i\in[n-1]: \pi(i-1)<\pi(i)>\pi(i+1),~\pi(0)=0\}.
\end{align*}
Define
$$W_n(x)=\sum_{\pi\in\msn}x^{\ipk(\pi)},~\overline{W}_n(x)=\sum_{\pi\in\msn}x^{\lpk(\pi)}.$$
The polynomials $W_n(x)$ and $\overline{W}_n(x)$ satisfy the recurrence rleations
$$W_{n+1}(x)=(nx-x+2)W_n(x)+2x(1-x)W_n'(x),$$
$$\overline{W}_{n+1}(x)=(nx+1)\overline{W}_n(x)+2x(1-x)\overline{W}_n'(x),$$
with $W_1(x)=\overline{W}_1(x)=1$ (see~\cite{Ma121,Petersen06}).
Note that $\deg \overline{W}_n(x)\geq \deg W_n(x)$. Then we
set $$p_n(x)=nx+1,~q_n(x)=2x(1-x),~r_n(x)=0.$$ Note that
$p_n(x)+\frac{1}{2x}q_n(x)=nx-x+2$. Then by using Theorem~\ref{mathm01}, we can define
\begin{align*}
\alpha_n(x)&=p_n(x^2)+\frac{1}{x}r_n(x^2)=nx^2+1,~
\beta_n(x)=\frac{1}{2x}q_n(x^2)=x(1-x^2).
\end{align*}
Therefore, we immediately get the following result, which has been proved in~\cite{Ma121}.
\begin{proposition}
Let $\{R_n(x)\}_{n\geq 1}$ be a sequence of polynomials defined by
\begin{equation}\label{Rnx01}
R_{n+1}(x)=(nx^2+1)R_n(x)+x(1-x^2)R_n'(x),
\end{equation}
with $R_1(x)=1+x$. Then $R_n(x)=\overline{W}_n(x^2)+xW_n(x^2)$.
\end{proposition}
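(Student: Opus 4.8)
The plan is to recognize the claimed identity as a direct instance of Theorem~\ref{mathm01}, applied to the Eulerian pair obtained by pairing $\overline{W}_n(x)$ with $W_n(x)$. The first step is to observe that, since $\deg\overline{W}_n(x)\geq\deg W_n(x)$, the ordered pair $\langle\overline{W}_n(x),W_n(x)\rangle$ is a candidate Eulerian pair in the sense of Definition~\ref{def01}, with $E_n(x)=\overline{W}_n(x)$ and $O_n(x)=W_n(x)$. Matching the two given recurrences against the system~\eqref{EulerianPair}, I would read off $p_n(x)=nx+1$, $q_n(x)=2x(1-x)$, $r_n(x)=0$ from the $\overline{W}_n$-recurrence, and $u_n(x)=nx-x+2$, $v_n(x)=2x(1-x)$, $w_n(x)=0$ from the $W_n$-recurrence. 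Note that the coupling terms vanish here ($r_n=w_n=0$), so each of the two recurrences is in fact self-contained, but this causes no difficulty for the Eulerian-pair formalism.

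Next I would verify that these polynomials satisfy every compatibility condition in Theorem~\ref{mathm01}. This is the computation already carried out in the paragraph preceding the statement: one checks $u_n(x)=p_n(x)+\tfrac{1}{2x}q_n(x)=nx-x+2$, $v_n(x)=q_n(x)$, and $w_n(x)=\tfrac{1}{x}r_n(x)=0$, and then reads the forcing polynomials in~\eqref{Eulerian03} off the duality relations~\eqref{dualrelation01}, namely $\alpha_n(x)=p_n(x^2)+\tfrac{1}{x}r_n(x^2)=nx^2+1$ and $\beta_n(x)=\tfrac{1}{2x}q_n(x^2)=x(1-x^2)$. The crucial remaining hypothesis is $\beta_n^e(x)=0$; since $\beta_n(x)=x-x^3$ contains only odd powers of $x$, its even part indeed vanishes, so all the conditions of Theorem~\ref{mathm01} hold.

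With the hypotheses in place, Theorem~\ref{mathm01} identifies the sequence satisfying the recurrence~\eqref{Eulerian03} with these $\alpha_n,\beta_n$ as having the Hermite--Biehler decomposition $E_n(x^2)+xO_n(x^2)=\overline{W}_n(x^2)+xW_n(x^2)$. Since the defining recurrence~\eqref{Rnx01} for $R_n(x)$ is precisely~\eqref{Eulerian03} with $\alpha_n(x)=nx^2+1$ and $\beta_n(x)=x(1-x^2)$, it remains only to confirm that $R_n$ and this distinguished sequence share the same starting value, so that they coincide for all $n$. I would check the base case $n=1$: because $\overline{W}_1(x)=W_1(x)=1$, the target expression evaluates to $\overline{W}_1(x^2)+xW_1(x^2)=1+x$, which matches the prescribed initial condition $R_1(x)=1+x$. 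An easy induction on $n$ using the recurrence step encoded in the proof of Theorem~\ref{mathm01} then yields $R_n(x)=\overline{W}_n(x^2)+xW_n(x^2)$ for all $n\geq1$.

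The argument is essentially bookkeeping, so there is no serious obstacle; the only point demanding care is the correct assignment of roles, namely that the higher-degree polynomial $\overline{W}_n$ must play the even part $E_n$ and $W_n$ the odd part $O_n$, consistent with the degree inequality in Definition~\ref{def01}, together with the verification that $\beta_n^e(x)=0$, which is exactly the structural condition that makes the decomposition possible.
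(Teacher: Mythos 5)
Your proposal is correct and follows essentially the same route as the paper: the authors likewise read off $p_n(x)=nx+1$, $q_n(x)=2x(1-x)$, $r_n(x)=0$ from the recurrences for $\overline{W}_n$ and $W_n$, verify the compatibility conditions of Theorem~\ref{mathm01} (including $\beta_n^e(x)=0$), and conclude via the duality relations~\eqref{dualrelation01} with the matching initial value $R_1(x)=1+x$. Your explicit check of the base case and the induction step is a slightly more careful write-up of the same argument.
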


The {\it up-down run polynomials} $T_n(x)$ are defined by $$T_n(x)=\sum_{\pi\in\msn}x^{\udrun(\pi)}.$$
The polynomials $T_n(x)$ satisfy the recurrence relation
\begin{equation}\label{Rnx02}
T_{n+1}(x)=x(nx+1)T_{n}(x)+x\left(1-x^2\right)T_{n}'(x),
\end{equation}
with initial conditions $T_0(x)=1$ and $T_1(x)=x$ (see~\cite{Ma132,Stanley08}).
Comparing~\eqref{Rnx01} with~\eqref{Rnx02}, it is easy to check that $$R_n(x)=\frac{1+x}{x}T_{n}(x).$$

Set $\msn^+=\{\pi\in\msn:~\pi(n-1)>\pi(n)\}$ and $\msn^-=\{\pi\in\msn:~\pi(n-1)<\pi(n)\}$.
We define
$$T_n^E(x)=\sum_{\pi\in\msn^+}x^{\lpk(\pi)},~T_n^O(x)=\sum_{\pi\in\msn^-}x^{\lpk(\pi)}.$$
By definition, we get the following result.
\begin{proposition}\label{PropUnx}
For $n\geq 1$, we have
\begin{equation*}
\left\{
  \begin{array}{l}
T_n(x)=T_n^E(x^2)+xT_n^O(x^2),\\
\overline{W}_n(x)=T_n^E(x)+T_n^O(x).
  \end{array}
\right.
\end{equation*}
\end{proposition}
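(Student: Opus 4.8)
The plan is to reduce both identities to a single combinatorial relationship between the statistics $\udrun$ and $\lpk$, keyed on whether $\pi$ lies in $\msn^+$ or $\msn^-$. The second identity is immediate: partitioning $\msn=\msn^+\cup\msn^-$ and invoking the definitions of $T_n^E(x)$ and $T_n^O(x)$ gives
$$\overline{W}_n(x)=\sum_{\pi\in\msn}x^{\lpk(\pi)}=\sum_{\pi\in\msn^+}x^{\lpk(\pi)}+\sum_{\pi\in\msn^-}x^{\lpk(\pi)}=T_n^E(x)+T_n^O(x),$$
so no further work is needed there.

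For the first identity, the key step is to establish the statistic relation
$$\udrun(\pi)=\begin{cases}2\lpk(\pi),&\pi\in\msn^+,\\2\lpk(\pi)+1,&\pi\in\msn^-.\end{cases}$$
First I would record that the up-down runs of $\pi$ are by definition the alternating runs of the extended word $w=0\,\pi(1)\,\pi(2)\cdots\pi(n)$, so $\udrun(\pi)$ equals the number of maximal monotone blocks of $w$, which is $1$ plus the number of interior local extrema (peaks and valleys) of $w$. Next I would observe that the peaks of $w$ at positions $1\le i\le n-1$, namely those $i$ with $w_{i-1}<w_i>w_{i+1}$, are exactly the left peaks of $\pi$ under the convention $\pi(0)=0$; hence $w$ has precisely $\lpk(\pi)$ peaks.

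The crux is the parity bookkeeping. Since $0<\pi(1)$, the word $w$ begins with an ascent, so its interior extrema alternate peak, valley, peak, $\ldots$, starting with a peak. Writing $p$ and $v$ for the numbers of peaks and valleys, this forces either $v=p$ or $v=p-1$ according to the direction of the final run: $w$ ends with an ascent exactly when $\pi(n-1)<\pi(n)$ (that is, $\pi\in\msn^-$), in which case $v=p$ and $\udrun(\pi)=p+v+1=2\lpk(\pi)+1$; otherwise $\pi(n-1)>\pi(n)$ (that is, $\pi\in\msn^+$), the final run is a descent, $v=p-1$, and $\udrun(\pi)=2\lpk(\pi)$. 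I expect this alternation argument---correctly matching the terminal direction of $w$ to membership in $\msn^+$ or $\msn^-$---to be the only delicate point; the edge case $n=1$, where $\msn^+=\varnothing$ and the unique permutation lies in $\msn^-$ with $\udrun=1=2\cdot 0+1$, should be verified to confirm the convention. Granting the statistic relation, the first identity follows by splitting the defining sum for $T_n(x)$ and substituting:
$$T_n(x)=\sum_{\pi\in\msn^+}x^{2\lpk(\pi)}+\sum_{\pi\in\msn^-}x^{2\lpk(\pi)+1}=T_n^E(x^2)+x\,T_n^O(x^2),$$
which completes the argument.
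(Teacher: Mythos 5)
Your proof is correct and follows exactly the route the paper intends: the paper simply asserts the proposition ``by definition,'' and your argument supplies the routine details, namely the relation $\udrun(\pi)=2\lpk(\pi)$ for $\pi\in\msn^+$ and $\udrun(\pi)=2\lpk(\pi)+1$ for $\pi\in\msn^-$ via the peak--valley alternation of the word $0\,\pi(1)\cdots\pi(n)$. The parity bookkeeping and the $n=1$ edge case are handled correctly, so nothing is missing.
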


From~\eqref{Rnx02}, we see that $\alpha_n(x)=nx^2+x,~\beta_n(x)=x(1-x^2)$ and $\beta_n^e(x)=0$.
Using~\eqref{dualrelation01}, we obtain $x(nx^2+x)=xp_n(x^2)+r_n(x^2)$ and $2x^2(1-x^2)=q_n(x^2)$.
Hence $$p_n(x)=nx,~q_n(x)=2x(1-x),~r_n(x)=x,$$
$$u_n(x)=nx+1-x=(n-1)x+1,~v_n(x)=2x(1-x),~w_n(x)=1.$$

By using~\eqref{Rnx02} and~\cite[Theorem~2]{Ma2008}, we see that $T_n(x)$ have only real nonpositive zeros.
Combining Theorem~\ref{mathm01} and Theorem~\ref{Hermite}, we obtain the sixth main result of this paper.
\begin{theorem}
For $n\geq1$, we have
\begin{equation}\label{Tnx-recu}
\left\{
  \begin{array}{l}
    T_{n+1}^E(x)=nxT_n^E(x)+2x(1-x)\frac{d}{dx}{T_n^E}(x)+xT_n^O(x), \\
    T_{n+1}^O(x)=((n-1)x+1)T_n^O(x)+2x(1-x)\frac{d}{dx}T_n^O(x)+T_n^E(x),
  \end{array}
\right.
\end{equation}
with the initial conditions $T_1^E(x)=0$ and $T_1^O(x)=1$.  Moreover, both $T_n^E(x)$ and $T_n^O(x)$ have only real negative zeros and $T_n^O(x)\prec T_n^E(x)$.
\end{theorem}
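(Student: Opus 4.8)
The recurrence system~\eqref{Tnx-recu} requires almost no new work, and I would dispatch it first. The paragraph preceding the theorem already extracts $p_n(x)=nx$, $q_n(x)=2x(1-x)$, $r_n(x)=x$, $u_n(x)=(n-1)x+1$, $v_n(x)=2x(1-x)$, $w_n(x)=1$ from the recurrence~\eqref{Rnx02} via~\eqref{dualrelation01}, and records that $\beta_n^e(x)=0$. Since $T_n(x)=T_n^E(x^2)+xT_n^O(x^2)$ by Proposition~\ref{PropUnx}, the calculation in the proof of Theorem~\ref{mathm01} (culminating in~\eqref{EulerianPair02}) says exactly that the pair $(T_n^E,T_n^O)$ obeys~\eqref{Tnx-recu}. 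The initial data are read off from $T_1(x)=x=0+x\cdot 1$, whence $T_1^E(x)=0$ and $T_1^O(x)=1$. So the first assertion is a direct appeal to Theorem~\ref{mathm01}.

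The substance is the zero statement, and here the plan is to \emph{avoid} applying the Hermite--Biehler Theorem~\ref{Hermite} to $T_n(x)$ itself: although $T_n(x)$ has only real nonpositive zeros (as recalled above from~\cite[Theorem~2]{Ma2008}), it is not stable, since $T_n(0)=0$. I would split off this boundary zero. First I would check that $0$ is a \emph{simple} zero of $T_n(x)$: combinatorially, $[x^1]T_n(x)$ counts permutations with a single up-down run, namely the identity, so $[x^1]T_n(x)=1$; equivalently $T_n^O(0)=1$ for all $n$ by an immediate induction on the second line of~\eqref{Tnx-recu}. Hence $g_n(x):=T_n(x)/x$ is a standard polynomial with only real \emph{negative} zeros, and therefore is stable.

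Now I would apply Theorem~\ref{Hermite} to $g_n(x)=g_n^E(x^2)+xg_n^O(x^2)$, obtaining that $g_n^E(x)$ and $g_n^O(x)$ are standard, have only nonpositive zeros, and satisfy $g_n^O(x)\prec g_n^E(x)$. This transfers to $T_n$ by matching parities: writing $T_n(x)=xg_n(x)=xg_n^E(x^2)+x^2g_n^O(x^2)$ and comparing with $T_n(x)=T_n^E(x^2)+xT_n^O(x^2)$ forces the identifications $T_n^O(x)=g_n^E(x)$ and $T_n^E(x)=xg_n^O(x)$. Consequently $T_n^O=g_n^E$ has only negative zeros (its constant term equals $g_n(0)\neq 0$), while $T_n^E=xg_n^O$ has the negative zeros of $g_n^O$ together with the one simple zero at $0$.

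It then remains to deduce $T_n^O(x)\prec T_n^E(x)$, that is $g_n^E(x)\prec xg_n^O(x)$, from $g_n^O(x)\prec g_n^E(x)$; this is the only genuinely new step and the place I expect the real care to be needed. I would isolate it as a short lemma: if $p,q$ are standard with only nonpositive zeros and $p\prec q$, then $q\prec xp$. The point is that $xp$ has the zeros of $p$ together with an extra zero at $0$, which is $\geq$ every zero of $p$ and so sits at the right end of the interlacing pattern; a one-line verification of the two cases of the definition of $\prec$ (``interlaces'' when $\deg q=1+\deg p$, ``alternates left'' when $\deg q=\deg p$) shows that appending this rightmost zero converts $p\prec q$ into $q\prec xp$. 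Taking $p=g_n^O$, $q=g_n^E$ yields $g_n^E\prec xg_n^O$, i.e.\ $T_n^O\prec T_n^E$. The main obstacle throughout is the bookkeeping of the boundary zero at $0$: it is what blocks a direct use of Theorem~\ref{Hermite} and what the appending lemma is designed to absorb, while the parity of $n$ merely selects which case of $\prec$ occurs and is handled uniformly by the lemma.
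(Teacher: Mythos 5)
Your derivation of the recurrence system is exactly the paper's: read off $p_n,q_n,r_n,u_n,v_n,w_n$ from \eqref{Rnx02} via \eqref{dualrelation01}, combine the computation in the proof of Theorem~\ref{mathm01} with Proposition~\ref{PropUnx}, and check the initial data from $T_1(x)=x$. For the zero statement, however, you take a more careful route than the paper, and the extra care is genuinely needed. The paper disposes of this part in one line, ``combining Theorem~\ref{mathm01} and Theorem~\ref{Hermite}'', but Theorem~\ref{Hermite} as stated requires $T_n(x)$ to be stable (all zeros in the \emph{open} left half plane), whereas $T_n(0)=0$, so the direct appeal is not literally licensed. Your fix --- verify that $0$ is a simple zero because $[x^1]T_n(x)=1$ (only the identity permutation has a single up-down run), pass to the stable polynomial $g_n(x)=T_n(x)/x$, apply Theorem~\ref{Hermite} there, translate back via $T_n^O=g_n^E$ and $T_n^E=xg_n^O$, and absorb the reappended zero at the origin with the lemma ``$p\prec q$ with nonpositive zeros implies $q\prec xp$'' --- is correct in both parity cases (I checked the lemma in each branch of the definition of $\prec$) and closes this gap. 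The one caveat your computation exposes, and which you should state explicitly rather than leave implicit, is that the theorem's conclusion is slightly overstated: $T_n^E(x)=xg_n^O(x)$ vanishes at $0$ (e.g.\ $T_2^E(x)=x$ and $T_3^E(x)=3x$), so ``only real negative zeros'' should read ``only real nonpositive zeros'' for $T_n^E(x)$; your argument proves exactly this corrected version, together with $T_n^O(x)\prec T_n^E(x)$.
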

\subsection{Alternating run polynomials for signed permutations}
\hspace*{\parindent}

Let $\pi\in\mbn$. The peak number and valley number of $\pi$ are respectively defined by
\begin{align*}
\pk(\pi)&=\#\{i\in[n-1]:~\pi(i-1)<\pi(i)>\pi(i+1),~\pi(0)=0\},\\
\val(\pi)&=\#\{i\in[n-1]: \pi(i-1)>\pi(i)<\pi(i+1),~\pi(0)=0\}.
\end{align*}
Recall that $C_n^+=\{\pi\in\mbn:~\pi(1)>0\}$.
We define
\begin{equation}\label{PnxVnx-def}
\begin{split}
U_n(x)&=\sum_{\pi\in C_n^+}x^{\pk(\pi)},~
V_n(x)=\sum_{\pi\in C_n^+}x^{\val(\pi)}.
\end{split}
\end{equation}

According to~\cite[Corollary~7]{Chow14}, the polynomials $U_n(x)$ and $V_n(x)$ satisfy the following system of Eulerian recurrences:
\begin{equation*}
\left\{
  \begin{array}{l}
U_{n+1}(x)=(2nx+1)U_{n}(x)+4x(1-x)U_{n}'(x)+xV_{n}(x),\\
V_{n+1}(x)=(2nx-2x+3)V_{n}(x)+4x(1-x)V_{n}'(x)+U_{n}(x),
  \end{array}
\right.
\end{equation*}
with $U_{0}(x)=1$ and $V_{0}(x)=0$. Note that $\deg U_n(x)\geq \deg V_n(x)$. Thus $(U_n(x),V_n(x))$ is a Eulerian pair.
Put
$$p_n(x)=2nx+1,~q_n(x)=4x(1-x),~r_n(x)=x,$$
$$u_n(x)=2nx-2x+3,~v_n(x)=4x(1-x),~w_n(x)=1.$$
Then $\alpha_n(x)=p_n(x^2)+\frac{1}{x}r_n(x^2)=2nx^2+x+1$ and
$\beta_n(x)=\frac{1}{2x}q_n(x^2)=2x(1-x^2)$.
Then by using Theorem~\ref{mathm01}, we immediately get the following result.
\begin{proposition}
Let $\{H_n(x)\}_{n\geq 0}$ be a sequence of polynomials defined by
\begin{equation}\label{Rnx01}
H_{n+1}(x)=(2nx^2+x+1)H_n(x)+2x(1-x^2)H_n'(x),
\end{equation}
with $H_0(x)=1$. Then $H_n(x)=U_n(x^2)+xV_n(x^2)$.
\end{proposition}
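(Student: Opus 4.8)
The plan is to derive the identity as a direct application of Theorem~\ref{mathm01}, so that the work reduces to checking that its hypotheses are met. The pair $(U_n(x),V_n(x))$ defined in~\eqref{PnxVnx-def} is, by~\cite[Corollary~7]{Chow14}, an Eulerian pair satisfying the system~\eqref{EulerianPair} with the data $p_n(x)=2nx+1$, $q_n(x)=4x(1-x)$, $r_n(x)=x$, $u_n(x)=2nx-2x+3$, $v_n(x)=4x(1-x)$, $w_n(x)=1$ already recorded above; and $H_n(x)$ is defined by a recurrence of the shape~\eqref{Eulerian03} with $\alpha_n(x)=2nx^2+x+1$ and $\beta_n(x)=2x(1-x^2)$. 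First I would note that the degree hypothesis $\deg U_n(x)\geq\deg V_n(x)$ of the Eulerian-pair definition is in force, so that Theorem~\ref{mathm01} may legitimately be invoked.

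The core step is to verify the six conditions of Theorem~\ref{mathm01}, which are all one-line polynomial identities. I would check $p_n(x)+\frac{1}{2x}q_n(x)=2nx-2x+3=u_n(x)$, $q_n(x)=v_n(x)$ and $\frac{1}{x}r_n(x)=1=w_n(x)$; and on the other side $p_n(x^2)+\frac{1}{x}r_n(x^2)=2nx^2+x+1=\alpha_n(x)$ together with $\frac{1}{2x}q_n(x^2)=2x(1-x^2)=\beta_n(x)$. The only condition that is not a mere rearrangement is $\beta_n^e(x)=0$: since $\beta_n(x)=2x-2x^3$ has only odd-degree terms, its even part vanishes. Equivalently, these checks amount to the duality relations~\eqref{dualrelation01}, which is precisely the mechanism that turns the single recurrence~\eqref{Eulerian03} into the paired system~\eqref{EulerianPair}.

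Finally I would confirm the compatibility of initial data, so that the inductive equivalence of Theorem~\ref{mathm01} applies with matching base cases: from $U_0(x)=1$ and $V_0(x)=0$ we get $U_0(x^2)+xV_0(x^2)=1=H_0(x)$. With the six conditions verified and the initial data consistent, Theorem~\ref{mathm01} yields $H_n(x)=U_n(x^2)+xV_n(x^2)$ for every $n\geq 0$. I do not expect a genuine obstacle: the argument is a short verification rather than a construction. The only points meriting care are keeping track of the direction in which the equivalence of Theorem~\ref{mathm01} is used, and not forgetting that the base case must be supplied alongside the coefficient conditions, since the theorem's computation matches recurrences but does not by itself pin down the value at $n=0$.
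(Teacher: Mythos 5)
Your proposal is correct and follows the same route as the paper: both derive the identity by reading off $p_n,q_n,r_n,u_n,v_n,w_n$ from the Chow--Ma recurrence system for $(U_n,V_n)$, checking the coefficient conditions of Theorem~\ref{mathm01} (including $\beta_n^e(x)=0$), and concluding. Your explicit verification of the base case $U_0(x^2)+xV_0(x^2)=1=H_0(x)$ is a small point of added care that the paper leaves implicit.
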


Let $\altrun(\pi)=\pk(\pi)+\val(\pi)$ be the number of alternating runs of $\pi$.
The alternating run polynomials for signed permutations are given as follows:
$$\widetilde{H}_n(x)=\sum_{\pi\in C_n^+}x^{\altrun(\pi)}.$$
Zhao~\cite{Zhao11} showed that
$\widetilde{H}_{n+1}(x)=(2nx^2+3x-1)\widetilde{H}_{n}(x)+2x\left(1-x^2\right)\widetilde{H}_{n}'(x)$ for $n\geqslant 1$,
with $\widetilde{H}_1(x)=x$. It is routine to verify that $$H_n(x)=\frac{1+x}{x}\widetilde{H}_n(x)$$ for $n\geq 1$, which has been proved in~\cite[Theorem~8]{Chow14}.
%
%
\section{Concluding remark}
In this paper, we consider the combinatorial aspects of the Hermite-Biehler decompositions of several enumerative polynomials.
Let $\{f_n(x)\}_{n\geq 0}$ be a sequence of polynomials with nonnegative coefficients. Suppose that
\begin{equation}\label{recufnx}
f_{n+1}(x)=\left(a_1n+a_2+(b_1n+b_2)x+(c_1n+c_2)x^2\right)f_n(x)+dx(1-x^2)f_n'(x).
\end{equation}
where $a_1,a_2,b_1,b_2,c_1,c_2,d\in\mathbb{R}$. Then
$$\alpha_n(x)=a_1n+a_2+(b_1n+b_2)x+(c_1n+c_2)x^2,~\beta_n(x)=dx(1-x^2).$$
It follows from~\eqref{dualrelation01} that
$$p_n(x)=a_1n+a_2+(c_1n+c_2)x,~q_n(x)=2dx(1-x),~r_n(x)=(b_1n+b_2)x.$$
By using Theorem~\ref{mathm01}, we obtain
$$u_n(x)=a_1n+a_2+d+(c_1n+c_2-d)x,~v_n(x)=2dx(1-x),~w_n(x)=b_1n+b_2,$$
and then we can derive the recurrence system of the polynomials $f^E(x)$ and $f^O(x)$.

Besides the polynomials discussed in Section~\ref{Section02}, many other enumerative polynomials also satisfy the recurrence~\eqref{recufnx}, see~\cite{Barbero14,Zhu,Zhuang17} for instance.
We end this paper by giving an example. Following~\cite[Definition~1]{Aval13}, a tree-like tableau is a Ferrers diagram where each cell
contains either 0 or 1 point with some constraints. The symmetric tableaux are tree-like tableaux which are invariant with respect to reflection
through the main diagonal of their diagram.
Let $b(n,k)$ be the number of symmetric tableaux of size $2n+1$ with $k$ diagonal cells, and let $b_n(x)=\sum_{k=1}^{n+1}b(n,k)x^k$.
It follows from~\cite[Proposition~18]{Aval13} that $$b_{n+1}(x)=(n+1)x(1+x)b_n(x)+x(1-x^2)b_n'(x),$$
with the initial condition $b_0(x)=x$. By using the recurrence system of the polynomials $b_n^E(x)$ and $b_n^O(x)$,
one can easily derive that $b_n^E(1)=b_n^O(1)=2^{n-1}n!$ for $n\geq 1$. We leave the details to the reader.
Thus it may be interesting to further explore properties of $b_n^E(x)$ and $b_n^O(x)$.
\section*{Acknowledgements.}
This work is supported by NSFC (12071063,11571235) and
NSC (108-2115-M-017-005-MY2,~107-2115-M-001-009-MY3).

\end{document}